\theoremstyle{plain}
\numberwithin{equation}{section}
\newcommand{\sch}[1]{\operatorname{{\bf #1}}}
\newtheorem{theorem}{Theorem}[section]
\newtheorem{lemma}[theorem]{Lemma}
\newtheorem{remark}[theorem]{Remark}
\newtheorem{proposition}[theorem]{Proposition}
\newtheorem{definition}[theorem]{Definition}
\begin{document}
\title{Compatibility of Kazhdan and Brauer Homomorphism}
\author{Sabyasachi Dhar}
\maketitle
\begin{abstract}
Let $G$ be a connected split reductive group defined over $\mathbb{Z}$.
Let $F$ and $F'$ be two non-Archimedean $m$-close local fields, where $m$ is a positive integer. D.Kazhdan gave an isomorphism
between the Hecke algebras
${\rm Kaz}_m^F :\mathcal{H}\big(G(F),K_F\big) \rightarrow
\mathcal{H}\big(G(F'),K_{F'}\big)$,
where $K_F$ and $K_{F'}$ are the $m$-th usual congruence subgroups 
of $G(F)$ and $G(F')$ respectively. On the other hand, if $\sigma$ 
is an automorphism of $G$ of prime order $l$, then we have Brauer 
homomorphism ${\rm Br}:\mathcal{H}(G(F),U(F))\rightarrow
\mathcal{H}(G^\sigma(F),U^\sigma(F))$, where $U(F)$ and $U^\sigma(F)$ are compact open subgroups of $G(F)$ and $G^\sigma(F)$ respectively. In this article, we study the compatibility between these two maps in the local base change setting. Further, an application of this compatibility is 
given in the context of linkage--which is the representation theoretic version of Brauer homomorphism.
\end{abstract}
	
\section{Introduction}
The local Langlands correspondence relates the set of irreducible
smooth complex representations of the group of rational points of a reductive group over a local field--with the representation theory of its
Weil--Deligne group. For a connected split reductive group $\textbf{G}$ defined over $\mathbb{Z}$, D. Kazhdan conjectured a link between the local Langlands correspondences for $\sch{G}(F)$ and $\sch{G}(F')$, where $F$ is a $p$-adic field and $F'$ is a non-Archimedean positive characteristic
local field--which are sufficiently close. Kazhdan's approach is via
isomorphisms between the respective Hecke algebras with complex
coefficients, $\mathcal{H}(\sch{G}(F),K)$ and
$\mathcal{H}(\sch{G}(F'),K')$, for some specific choice of
open-compact subgroups $K$ and $K'$ of $\sch{G}(F)$ and $\sch{G}(F')$
respectively (see \cite[Theorem A]{MR874049}). Say $L$ is a 
non-Archimedean local field with residue characteristic $p$ and assume
that $K$ is a nice compact open subgroup of $\sch{G}(L)$--for instance
a congruence subgroup of positive level. When $\sch{G}(L)$ has an
automorphism, denoted by $\sigma$, of prime order $l\neq p$ such that
$\sigma(K)=K$, Treumann--Venkatesh (\cite[Section 4]{MR3432583})
define an $\overline{\mathbb{F}}_l$-algebra morphism between Hecke algebras with $\overline{\mathbb{F}}_l$-coefficients,
$\mathcal{H}(\sch{G}(L), K)^\sigma$ and
$\mathcal{H}(\sch{G}^\sigma(L), K^\sigma)$, which is called the Brauer homomorphism, and it is conjectured to be compatible with Langlands
functoriality. This article aims to study the
compatibility between the Brauer homomorphism and Kazhdan isomorphism in the local base change setting. So far, this type of compatibility has not been addressed in the literature. Using this compatibility, one can relate the functoriality principles for connected reductive algebraic groups defined over the close local fields $F$ and $F'$. This article gives one such treatment in the context of linkage (see \cite[Section 6]{MR3432583})--which is related to the functoriality over $\overline{\mathbb{F}}_l$.
	
Let $\mathfrak{o}_F$ be the ring of integers of $F$, and let
$\mathfrak{p}_F$ be the maximal ideal of $\mathfrak{o}_F$. Let $E$ be
a Galois extension of $F$ of prime degree $l$, where $l$ is different from the residue characteristic of $F$. Let $\sigma$ be a generator of ${\rm Gal}(E/F)$. Similar notations follow for $E$. Let $K_E$ (resp. $K_F$) be the $m$-th congruence subgroup of $\sch{G}(\mathfrak{o}_E)$ (resp. $\sch{G}(\mathfrak{o}_F)$). Note that $K_E$ is stable under
the action of $\sigma$ and let $K_F = K_E\cap \sch{G}(F)$. Treumann--Venkatesh ( \cite[Section 4]{MR3432583}) define Brauer homomorphism, denoted by ${\rm Br}$--which is the $\overline{\mathbb{F}}_l$-algebra homomorphism
$$ {\rm Br}: \mathcal{H}(\sch{G}(E), K_E)^\sigma\rightarrow \mathcal{H}(\sch{G}(F), K_F), $$
obtained by restriction of functions on $\sch{G}(E)$ to
$\sch{G}(F)$. On the other hand, following Kazhdan's approach in \cite[Theorem A]{MR874049}, we get an $\overline{\mathbb{F}}_l$-algebra isomorphism
(Proposition \ref{Kaz_isom_mod_l})
$$
{\rm Kaz}_m^F : \mathcal{H}(\sch{G}(F),K_F)
\rightarrow\mathcal{H}(\sch{G}(F'),K_{F'}),
$$
provided the fields $F$ and $F'$ are sufficiently close. The above isomorphism ${\rm Kaz}_m^F$ was originally constructed for complex Hecke algebras (Proposition \ref{Kaz_isom_complex})--which relies on the finiteness of complex Hecke algebras, due to Bernstein (\cite[Corollary 3.4]{Bernstein_center}). For arbitrary Noetherian $\mathbb{Z}_l$-algebra $R$, where $\mathbb{Z}_l$ is the ring of integers of the field of $l$-adic numbers, the finiteness of Hecke algebras with coefficients in $R$ follows from the work of \cite[Theorem 1.1]{dat2024finiteness}. It enables us to define the isomorphism ${\rm Kaz}_m^F$ between mod-$l$ Hecke algebras. 

The method of close local fields is fruitful in obtaining analogous results for reductive groups defined over local fields of positive characteristic--which are known for reductive groups over $p$-adic fields. Using the close local fields approach, A.I. Badulescu (\cite{J_L_Badulescu}) proved the Jacquet-Langlands correspondence for the general linear group ${\rm GL}_n(F')$. A similar technique was used by Badulescu--Henniart--Lemaire--S\'echerre (\cite{Unitary_dual_GLn(D)}) to classify the smooth unitary dual of ${\rm GL}_n(D)$, where $D$ is a central division algebra over $F'$. Kazhdan isomorphism and its various properties have been further studied by several others, notably by R. Ganapathy (see \cite{ganapathy2015local}, \cite{Radhika_Hecke_algebra_general}).
	
We now state the main results of this article. Let $\textbf{G}$ be a connected split reductive group defined over $\mathbb{Z}$. Let $F$ and $F'$ be two non-Archimedean local fields with the same residue characteristic $p$. We assume that $F$ and $F'$ are $m$-close (i.e., there exists a ring isomorphism $\mathfrak{o}_F/\mathfrak{p}_F^m\xrightarrow{\sim} \mathfrak{o}_{F'}/\mathfrak{p}_{F'}^m$). Let $E/F$ be a finite Galois extension of prime degree $l$ with $l\ne p$. Then there exists a finite Galois extension $E'/F'$ of degree $l$ such that $E'$ is $em$-close to $E$, where $e=1$ if $E/F$ is unramified, and $e=l$ if $E/F$ is totally ramified (Lemma \ref{lemma_close}). Let $\sigma$ (resp. $\sigma'$) be a generator
of the group ${\rm Gal}(E/F)$ (resp. ${\rm Gal}(E'/F')$). Then we have the Kazhdan isomorphism ${\rm Kaz}_{em}^E$ between the mod-$l$ Hecke algebras $\mathcal{H}(\textbf{G}(E),K_E)$ and $\mathcal{H}(\textbf{G}(E'),K_{E'})$, and it is equivariant under the Galois actions (Proposition \ref{comp_galois_action})--which leads to the following diagram: 
\begin{equation}\label{c_1} 
\xymatrix{
	\mathcal{H}(\textbf{G}(E),K_E)^\sigma \ar[dd]_{{\rm Br}}
	\ar[rr]^{{\rm Kaz}_{em}^E}
	&&  \mathcal{H}(\textbf{G}(E'),K_{E'})^{\sigma'} \ar[dd]^{{\rm Br}'} \\\\
	\mathcal{H}(\textbf{G}(F),K_F) \ar[rr]_{{\rm Kaz}_m^F} &&
	\mathcal{H}(\textbf{G}(F'),K_{F'})} 
\end{equation} 
In this article, we address the natural question about the commutativity of the above diagram. Let us state it as a theorem.
\begin{theorem}\label{intro_thm}
Let $F$ and $F'$ be two non-Archimedean $m$-close local fields with residue characteristic $p$. Let $E$ be a finite Galois extension of $F$ of prime degree $l$ with $l\ne p$, and let $E'$ be the Galois extension of $F'$, as indicated above. Then, for any connected split reductive group $\textbf{G}$ defined over $\mathbb{Z}$, we have
$$
{\rm Br}'\circ{\rm Kaz}_{em}^E = {\rm Kaz}_m^F\circ{\rm Br}.
$$  
\end{theorem}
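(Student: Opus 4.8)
The plan is to verify the identity on an $\overline{\mathbb{F}}_l$-basis of the source $\mathcal{H}(\textbf{G}(E),K_E)^\sigma$; since both ${\rm Br}'\circ{\rm Kaz}_{em}^E$ and ${\rm Kaz}_m^F\circ{\rm Br}$ are $\overline{\mathbb{F}}_l$-linear, this suffices. As $l$ is prime, $\sigma$ permutes the double-coset basis $\{[K_E gK_E]\}$ of $\mathcal{H}(\textbf{G}(E),K_E)$ in orbits of size $1$ or $l$, so a basis of the invariants is given by the $\sigma$-fixed indicators $[K_E gK_E]$ together with the orbit sums $\Theta_g=\sum_{i=0}^{l-1}[K_E\sigma^i(g)K_E]$ attached to size-$l$ orbits. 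I use throughout that, once $m$ (hence $em$) is large enough compared with the norms of the double cosets in play, the Kazhdan isomorphisms act on double-coset indicators through bijections $\beta_F,\beta_E$ of bounded-norm double cosets, i.e. ${\rm Kaz}_m^F([K_FhK_F])=[K_{F'}\beta_F(h)K_{F'}]$ and similarly for $\beta_E$ (the factor $e$ in the $em$-closeness of $E$ and $E'$ being precisely what aligns the $E$- and $F$-side ranges of validity); that ${\rm Kaz}_{em}^E$ is $\sigma$-$\sigma'$-equivariant (Proposition \ref{comp_galois_action}), so $\beta_E$ intertwines $\sigma$ and $\sigma'$; and that ${\rm Br}$ and ${\rm Br}'$ are given by restriction of functions. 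One elementary fact recurs: if $x\in\textbf{G}(F)=\textbf{G}(E)^\sigma$ lies in $K_E gK_E$ then $x=\sigma(x)\in K_E\sigma(g)K_E$, so $K_E gK_E$ is $\sigma$-fixed; hence $\textbf{G}(F)$ meets no double coset lying in a size-$l$ orbit, and likewise $\textbf{G}(F')\subseteq\textbf{G}(E')$. It follows that ${\rm Br}(\Theta_g)=0$ for a size-$l$ orbit, so ${\rm Kaz}_m^F({\rm Br}(\Theta_g))=0$; and by equivariance ${\rm Kaz}_{em}^E(\Theta_g)$ is again an orbit sum over a size-$l$ $\sigma'$-orbit, which ${\rm Br}'$ likewise annihilates. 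So both composites vanish on every $\Theta_g$, and only the $\sigma$-fixed indicators remain.

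Fix $g$ with $K_E gK_E$ a $\sigma$-fixed double coset, and write $(K_E gK_E)\cap\textbf{G}(F)=\bigsqcup_j K_F h_jK_F$, a finite---possibly empty---disjoint union; each $h_j$ has bounded $F$-norm, since its elements have bounded $E$-norm and the valuation $v_E$ restricts to $e\cdot v_F$ on $F$. As ${\rm Br}$ is restriction, ${\rm Br}([K_E gK_E])=\sum_j[K_Fh_jK_F]$, whence ${\rm Kaz}_m^F({\rm Br}([K_E gK_E]))=\sum_j[K_{F'}\beta_F(h_j)K_{F'}]$. On the other side, ${\rm Kaz}_{em}^E([K_E gK_E])=[K_{E'}\beta_E(g)K_{E'}]$ is $\sigma'$-fixed by equivariance, so ${\rm Br}'({\rm Kaz}_{em}^E([K_E gK_E]))$ is the indicator of $(K_{E'}\beta_E(g)K_{E'})\cap\textbf{G}(F')$. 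Hence the theorem reduces to the set-theoretic equality
$$
(K_{E'}\,\beta_E(g)\,K_{E'})\cap\textbf{G}(F')=\bigsqcup_j K_{F'}\,\beta_F(h_j)\,K_{F'},
$$
which in turn follows once one knows: for every $h\in\textbf{G}(F)$ of bounded norm, the double coset $\beta_E(K_E hK_E)$ has a representative lying in $\textbf{G}(F')$ which represents $\beta_F(K_F hK_F)$. Indeed, applying this to each $h_j$---and using $\beta_E(K_E h_jK_E)=\beta_E(K_E gK_E)$ since $h_j\in K_E gK_E$---gives $\supseteq$; the same statement with the pairs $(F,E)$ and $(F',E')$ interchanged, together with the injectivity of $\beta_F$ (so disjointness is preserved), gives $\subseteq$, and also forces the right-hand side empty whenever the left-hand side is.

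It remains to prove that $\beta_E$ restricts to $\beta_F$ along $\textbf{G}(F)\hookrightarrow\textbf{G}(E)$, and this is the technical core of the argument. Unwinding Kazhdan's construction, $\beta_L$ sends $K_L hK_L$ of norm $\le N$ to the double coset of any lift to $\textbf{G}(L')$ of the image of the reduction $\bar h\in\textbf{G}(\mathfrak{o}_L/\mathfrak{p}_L^{\,m_0})$ under the isomorphism $\textbf{G}(\mathfrak{o}_L/\mathfrak{p}_L^{\,m_0})\xrightarrow{\sim}\textbf{G}(\mathfrak{o}_{L'}/\mathfrak{p}_{L'}^{\,m_0})$ induced by the close-field data, for a suitable $m_0=m_0(N)$; this double coset is well-defined and of norm $\le N$, a lift existing by smoothness of $\textbf{G}$ over $\mathbb{Z}$. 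By Lemma \ref{lemma_close} the isomorphism $\mathfrak{o}_E/\mathfrak{p}_E^{\,em_0}\xrightarrow{\sim}\mathfrak{o}_{E'}/\mathfrak{p}_{E'}^{\,em_0}$ may be chosen to extend $\mathfrak{o}_F/\mathfrak{p}_F^{\,m_0}\xrightarrow{\sim}\mathfrak{o}_{F'}/\mathfrak{p}_{F'}^{\,m_0}$, $\sigma$-$\sigma'$-equivariantly (Proposition \ref{comp_galois_action}). Hence for $h\in\textbf{G}(F)$ the reduction $\bar h$ lies in the image of $\textbf{G}(\mathfrak{o}_F/\mathfrak{p}_F^{\,m_0})$, its transport lies in the image of $\textbf{G}(\mathfrak{o}_{F'}/\mathfrak{p}_{F'}^{\,m_0})$, a lift of the latter can already be found in $\textbf{G}(F')$, and such a lift represents $\beta_F(K_F hK_F)$ by the definition of $\beta_F$---which is precisely the claim. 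I expect this last step to be the principal difficulty: beyond carefully matching the two Kazhdan transports along the inclusion of the $\sigma$-invariant subfield, one must check that a bounded-norm $K_E$-double coset meets $\textbf{G}(F)$ in finitely many $K_F$-double cosets that are themselves of bounded norm, so that $\beta_F$ applies to them; granting this, the orbit bookkeeping of the first paragraph is purely formal.
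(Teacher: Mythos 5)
Your reduction to the set-theoretic identity
$$
\bigl(K_{E'}\,\beta_E(g)\,K_{E'}\bigr)\cap\textbf{G}(F')=\bigsqcup_j K_{F'}\,\beta_F(h_j)\,K_{F'}
$$
for $\sigma$-fixed double cosets $K_EgK_E$ is exactly the heart of the paper's argument (compare the intersections (\ref{decom_1}) and (\ref{decom_2})), and your preliminary observation that $\textbf{G}(F)$ can only meet $\sigma$-fixed $K_E$-double cosets, so that both composites kill the size-$l$ orbit sums, is a correct and clean simplification that the paper does not bother to isolate (the paper simply verifies the identity on all Cartan basis vectors of $\mathcal{H}(G_E,K_E)$, not just on a basis of the $\sigma$-invariants, which it may do since the maps being compared are linear). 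So far so good.

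The gap, which you yourself flag as ``the principal difficulty,'' is in the proof of your key lemma that $\beta_E$ restricts to $\beta_F$, and the sketch you give of it is not correct as written. You say $\beta_L$ acts by transporting ``the reduction $\bar h\in\textbf{G}(\mathfrak{o}_L/\mathfrak{p}_L^{m_0})$'' and lifting, but a general $h$ indexing a $K_L$-double coset is not integral, so $\bar h$ does not exist; Kazhdan's bijection does not reduce $h$ itself. What it reduces is the Cartan data: $K_L hK_L=K_L a_i\varpi_\lambda a_j^{-1}K_L$ with $a_i,a_j\in\textbf{G}(\mathfrak{o}_L)$, $\lambda\in X_*(\textbf{T})^-$, and it transports the pair $(\bar a_i,\bar a_j)$ modulo the stabilizer $\Gamma_\lambda$ while carrying $\lambda$ (hence the uniformizer factor $\varpi_\lambda$) along by the fixed correspondence of uniformizers. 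Because of this, relating $\beta_E$ on $K_EhK_E$ to $\beta_F$ on $K_FhK_F$ for $h\in\textbf{G}(F)$ is not a matter of compatible reductions of $h$; it requires matching two \emph{different} Cartan decompositions of the same $h$ (one in $G_E$ with uniformizer $\pi$ and cocharacter $\mu$, one in $G_F$ with uniformizer $\varpi$ and cocharacter $\xi$), and the cocharacters are not equal in general: in the totally ramified case one must have $\mu=l\xi$, and the unit $\pi^l/\varpi$ enters through the stabilizer $\Gamma_\mu(E)$. This is precisely the content of the case-by-case analysis in the paper's Section 5 (see the equivalences (\ref{eq_3}) and (\ref{equ_10}) and the manipulations with $\Gamma_\mu$ around them), which also explains why the proof must split into the unramified and totally ramified cases rather than being uniform. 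Once you replace your naive ``reduce and lift $h$'' description with this Cartan-level bookkeeping — proving that $(\bar b_i^{-1}\bar a_i,\bar b_j^{-1}\bar a_j)\in\Gamma_\mu(E)$ iff the primed version lies in $\Gamma_\mu(E')$, and tracking $\mu=\lambda$ versus $\mu=l\xi$ — the orbit bookkeeping and the $\supseteq/\subseteq$ argument you outline do go through.
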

Using the above theorem, we prove the compatibility of linkage with Kazhdan isomorphism. Say $(\pi,V)$ is an irreducible smooth $\overline{\mathbb{F}}_l$-representation of $\textbf{G}(E)$ such that $\pi$ is isomorphic to twisted representation $\pi^\sigma$. Then we have the Tate cohomology groups $\widehat{H}^i(\sigma,V)$, for $i\in\{0,1\}$, defined as $\overline{\mathbb{F}}_l$-representations of $\textbf{G}(F)$. An irreducible sub--quotient $\rho$ of $\widehat{H}^i(\sigma,V)$ is said to be linked with $\pi$. Let $\rho'$ (resp. $\pi'$) be the smooth irreducible $\overline{\mathbb{F}}_l$-representation of $\textbf{G}(F')$ (resp. $\textbf{G}(E')$) associated with $\rho$ (resp. $\pi$) via Kazhdan isomorphism and the natural correspondence between the set of simple modules over Hecke algebras and the set of irreducible representations of locally profinite group (see \cite[Section 2.3]{ganapathy2015local}). Then, using Theorem \ref{intro_thm} and the compatibility of Brauer homomorphism with linkage (see subsection \ref{com_linkage}), we show that $\rho'$ is linked with $\pi'$ if and only if $\rho$ is linked with $\pi$ (Theorem \ref{linkage_kaz}).
	
We now briefly explain the contents of this article. Section $2$ reviews the Hecke algebra structure associated with any locally profinite group. In Section $3$, we discuss the Brauer homomorphism. In Section $4$, we review the Kazdhan isomorphism and formulate its mod-$l$ version. We also set up some initial results, which are crucial for the main result. Theorem \ref{intro_thm} is proved in Section $5$. In Section $6$, we prove the compatibility of linkage with the Kazhdan isomorphism. It also includes some finiteness results on the Tate cohomology groups.
	
\section{Hecke Algebra}\label{HA}
This section reviews the Hecke algebra structure on any locally profinite group. We refer to \cite[Subsection 2.10]{MR3432583} for more insight.
\subsection{}\label{id}
Let $k$ be an algebraically closed field of positive characteristic $l$. Let $G$ be a locally profinite (i.e., locally compact and totally disconnected) group. Let $K$ be a compact open subgroup of $G$. We denote by $G/K$ the set of all distinct left cosets of $K$ in $G$, and it is a discrete set with a left action of $G$. For any $g\in G$, we sometimes use the notation $[g]$ to denote the left coset $gK$. There is a natural action of $G\times G$, via left translation, on the space of $k$-valued functions on the discrete set $G/K\times G/K$. We denote by $\mathcal{F}(G//K)$ the space of all such $k$-valued functions on $G/K\times G/K$ that are invariant under the action of the diagonal subgroup $\Delta G=\{(g,g):g\in G\}$, and whose support is a union of finitely many $G$-orbits. There is a multiplicative structure in the space
$\mathcal{F}(G//K)$, given by
$$
(f_1 * f_2)([x],[z])=\sum_{y\in G/K}f_1([x],[y])f_2([y],[z]),
$$
for all $[x],[z]\in G/K$ and $f_1,f_2\in\mathcal{F}(G//K)$. The space $\mathcal{F}(G//K)$ with the above multiplicative structure is a $k$-algebra, and it is called the {\it Hecke} algebra of $G$ associated with $K$. We now give an equivalent description of the Hecke algebra relevant to our context.  
\subsection{}
Let us consider the equivalence relation $\sim$ on the set $G/K\times G/K$, where
$$
([x],[y]) \sim ([x'],[y'])
$$ 
if and only if there exists $g\in G$ such that
$[gx] = [gx']$ and $[gy] = [gy']$. Then for any $[g],[h]\in G/K$, the map $([g],[h])\mapsto Kg^{-1}hK$ induces a bijection from the set of equivalence classes $(G/K\times G/K)/\sim$ onto the double coset space $K\setminus G/K$. 
	
Let $\mathcal{H}(G,K)$ be the space of all
$k$-valued functions which are bi-$K$-invariant i.e., $f(k_1gk_2) = f(g),$ for all $k_1,k_2 \in K$, $g \in G$ and $f \in \mathcal{H}(G,K)$. Then, the preceding bijection gives the following map
$$ 
\mathcal{F}(G//K) \longrightarrow \mathcal{H}(G,K)
$$
\begin{equation}\label{bij_2}
\phi \longmapsto \widetilde{\phi},
\end{equation} 
where the function $\widetilde{\phi}:G\rightarrow k$ is defined as $\widetilde{\phi}(g):=\phi([1],[g])$, for all $g\in G$. The map (\ref{bij_2}) is a bijection, and it induces a $k$-algebra structure on the space $\mathcal{H}(G,K)$. In the literature, many authors refer to the $k$-algebra $\mathcal{H}(G,K)$ as the Hecke algebra of $G$ associated with $K$. 
	
\section{Brauer Homomorphism}\label{B}
\subsection{}
Let $G$ be a locally profinite group, and let $\sigma$ be an automorphism of $G$ of order $l$. Let $K$ be a compact open subgroup of $G$ with $\sigma(K)=K$. Let $G^\sigma$ (resp. $K^\sigma$) be the the fixed subgroup of $G$ (resp. $K$) under the action of $\sigma$. Moreover, the compact open subgroup $K$ is called {\it $\sigma$-plain} if
\begin{enumerate}
\item The inclusion 
$$G^\sigma/K^\sigma \hookrightarrow (G/K)^\sigma$$
$$ gK^\sigma\mapsto gK $$
induces a bijection.
\item There exists a subgroup $K'$ of $K$ of finite index such that $K'$
is an inverse limit of finite subgroups, each of which is coprime to $l$.
\end{enumerate}
\subsection{}
For any $\sigma$-plain compact open subgroup $K$ of $G$, the discrete set $G/K \times G/K$ is equipped with an action of the group $\langle \sigma\rangle$, defined as
$$
\sigma.([g],[h]) := \big([\sigma(g)],[\sigma(h)]\big),
$$ 
for all $[g],[h]\in G/K$. This action further factorizes through the  quotient space $G/K\times G/K/\sim$, and it is given by
$$
\sigma.\overline{([g],[h])} =
\overline{([\sigma(g)],[\sigma(h)])},
$$
where $\overline{([g],[h])}$ denotes the equivalence class of the element $([g],[h])$. The space
$\mathcal{F}(G//K)$ carries a natural action of $\langle\sigma\rangle$, defined by
$$
(\sigma.\phi)([x],[y]) =
\phi\big([\sigma^{-1}(x)],[\sigma^{-1}
(y)]\big),
$$ 
for all $x,y\in G$ and $\phi\in \mathcal{F}(G//K)$. In the article
\cite[Section 4]{MR3432583}, the authors proved that the Brauer homomorphism ${\rm Br}$ is a $k$-algebra homomorphism from $\mathcal{F}(G//K)^\sigma$ to
$\mathcal{F}(G^\sigma//K^\sigma)$, and defined by
the restriction
\begin{equation}\label{Br_hom}
{\rm Br}(\phi):= {\rm Res}_{(G^\sigma/K^\sigma \times
G^\sigma/K^\sigma)}(\phi)
\end{equation} for all $\phi\in \mathcal{F}(G//K)^\sigma$.
	
\subsection{}
Now, we formulate the Brauer homomorphism (\ref{Br_hom}) for the $k$-algebra $\mathcal{H}(G,K)$, defined as in the preceding section. First, let us consider the action of the group $\langle\sigma\rangle$
on the double coset space $K\setminus G/K$ :
$$ 
\sigma.(KgK) := K\sigma(g)K,
$$ 
for all $g\in G$. Then, we have a natural
induced action of $\langle\sigma\rangle$ on the space $\mathcal{H}(G,K)$, defined as
$$
(\sigma.f)(g):= f(\sigma^{-1}(g)),
$$ 
for all $g\in G$. Note that the
bijection (\ref{bij_2}) preserves the action of the group
$\langle\sigma\rangle$. Then, we have a commutative diagram
$$ 
\xymatrix{
	\mathcal{F}(G//K)^\sigma \ar[dd]_{{\rm Br}}  \ar[rr]^{}  &&
	\mathcal{H}(G,K)^\sigma \ar[dd]^{{\rm \overline{Br}}} \\\\
	\mathcal{F}(G^\sigma//K^\sigma) \ar[rr]_{} &&  \mathcal{H}(G^\sigma,K^\sigma)
}
$$ 
Here, the horizontal arrows in the diagram are the bijections induced by (\ref{bij_2}). We observe that the $k$-algebra map ${\rm \overline{Br}}:\mathcal{H}(G,K)^\sigma
\rightarrow\mathcal{H}(G^\sigma,K^\sigma)$, induced by the above diagram, is the restriction map to subgroup $G^\sigma$, i.e.,
$$
{\rm \overline{Br}}(f):= {\rm Res}_{G^\sigma}(f),
$$ 
for all $f \in \mathcal{H}(G,K)^\sigma$
	
\section{Kazhdan isomorphism}\label{K}
In this part, we review Kazhdan isomorphism and set up some initial results. Let $F$ and $F'$ be the non-Archimedean local fields, both of residue characteristic $p$, with ring of
integers $\mathfrak{o}_F$ and $\mathfrak{o}_{F'}$ and its maximal ideals
$\mathfrak{p}_F$ and $\mathfrak{p}_{F'}$ respectively. The fields $F$
and $F'$ are called {\it $m$-close} if there is a ring isomorphism
$\Lambda:
\mathfrak{o}_F/\mathfrak{p}_F^m\rightarrow\mathfrak{o}_{F'}/
\mathfrak{p}_{F'}^m$. We choose uniformizer $\varpi$ (resp. 
$\varpi'$) of the field $F$ (resp. $F'$) such that the class of $\varpi$ corresponds to the class of $\varpi'$ via the map $\Lambda$.
	
\subsection{}
Let $\textbf{G}$ be a split connected reductive group defined over
$\mathbb{Z}$. Fix a Borel subgroup $\textbf{B}$ of $\textbf{G}$,
defined over $\mathbb{Z}$. Let $\textbf{B}=\textbf{T}\textbf{U}$ be a
Levi decomposition over $\mathbb{Z}$ with maximal torus $\textbf{T}$
and unipotent radical $\textbf{U}$. We denote by $X^*(\textbf{T})$ and
$X_*(\textbf{T})$ the character lattice and the co-character lattice of
$\textbf{T}$ respectively. Then, there is a natural
$\mathbb{Z}$-bilinear pairing
$$ X^*(\textbf{T})\times X_*(\textbf{T})\longrightarrow\mathbb{Z} $$
$$ (\alpha,\lambda)\longmapsto\langle\alpha,\lambda\rangle, $$
where $(\lambda\circ\alpha)(x)=x^{\langle\alpha,\lambda\rangle}$, for
all $x\in\mathbb{G}_m$.  Let $\Phi\subseteq X^*(\textbf{T})$ be the
set of roots of $\textbf{T}$ in $\textbf{G}$, and let $\Phi^+$ be the
set of positive roots of $\textbf{T}$ in $\textbf{B}$. 
\subsubsection{}
Let $G_F$ denotes the group $\textbf{G}(F)$. Consider the subgroup
$$ K_F={\rm Ker}\big(\textbf{G}(\mathfrak{o}_F)
\longrightarrow\textbf{G}(\mathfrak{o}_F
/\mathfrak{p}_F^m)\big). $$ 
Then $K_F$ is a compact open subgroup of
$G_F$. Fix a Haar measure $\mu_F$ on $G_F$ such that
$\mu_F(K_F)=1$. For each $g\in G_F$, we denote by $t_g$ the
characteristic function on the double coset $K_FgK_F$. Then the Hecke
algebra $\mathcal{H}(G_F,K_F)$ is generated as a $k$-vector space by
$\{t_g:g\in G_F\}$. We denote by $\mathcal{H}_{\mathbb{Z}}(G_F,K_F)$ the space generated by $\{t_g:g\in G_F\}$ over $\mathbb{Z}$. Set
$$ X_*(\textbf{T})^{-} = \{\lambda\in X_*(\textbf{T}):\langle\alpha,\lambda
\rangle\leq 0,\alpha\in \Phi^{+}\}. $$ 
We have the Cartan decomposition
\begin{equation}\label{CD}
G_F=\coprod_{\mu\in X_*(\textbf{T})^{-}}\textbf{G}(\mathfrak{o}_F)\varpi_{\mu}
\textbf{G}(\mathfrak{o}_F),
\end{equation}
where $\varpi_{\mu}=\mu(\varpi)$. For each $\mu\in X_*(\textbf{T})^{-}$, the	double coset $G_F(\mu):=\textbf{G}(\mathfrak{o}_F)\varpi_{\mu}
\textbf{G}(\mathfrak{o}_F)$ is a homogeneous space under the action of the
group $\textbf{G}(\mathfrak{o}_F)\times\textbf{G}(\mathfrak{o}_F)$, defined as
$$ (x,y).g:=xgy^{-1}, $$ 
for all $x,y\in \textbf{G}(\mathfrak{o}_F)$ and $g\in
G_F(\mu)$. Then, the finite group
$H_m:=\textbf{G}(\mathfrak{o}_F/\mathfrak{p}_F^m)\times
\textbf{G}(\mathfrak{o}_F/\mathfrak{p}_F^m)$ acts transitively on the
set $\{K_FxK_F:x \in G_F(\mu)\}$. Let $\Gamma_{\mu}(F)$ be the
stabilizer of $K_F\varpi_{\mu}K_F$ in $H_m$. Let
$T_\mu(F)\subseteq\textbf{G}(\mathfrak{o}_F)
\times\textbf{G}(\mathfrak{o}_F)$
be the set of representatives of the coset space $H_m/\Gamma_\mu(F)$
under the composition
$$ \textbf{G}(\mathfrak{o}_F)\times\textbf{G}
(\mathfrak{o}_F)\xrightarrow{{\rm mod}-\mathfrak{p}_F^m}H_m\longrightarrow H_m/\Gamma_\mu(F) $$
Then (\ref{CD}) can be re-written as
\begin{equation}\label{MCD}
G_F = \coprod_{\mu\in X_*(\textbf{T})^{-}}\coprod_{(b_i,b_j)\in
T_\mu(F)}K_Fb_i\varpi_\mu b_j^{-1}K_F.
\end{equation}
\subsubsection{}
For each $\mu\in X_*(\textbf{T})^{-}$, the isomorphism $\Lambda$ induces  a bijection $T_\mu(F)\rightarrow T_\mu(F')$. Following Kazhdan's constructions in \cite[Section 1]{MR874049}, we get an isomorphism of $\mathbb{Z}$-modules:
\begin{equation}\label{Kaz_isom_integral}
\mathcal{H}_{\mathbb{Z}}(G_F,K_F)\longrightarrow \mathcal{H}_{\mathbb{Z}}(G_{F'},K_{F'}) 
\end{equation}
$$ t_{a_i\varpi_{\lambda}a_j^{-1}}\longmapsto
t_{a_i'\varpi_{\lambda}'a_j'^{-1}}, $$
and this induces a $\mathbb{C}$-vector space isomorphism
$$ {\rm Kaz}_{m,\mathbb{C}}^F : \mathcal{H}_{\mathbb{C}}(G_F,K_F)\longrightarrow
\mathcal{H}_{\mathbb{C}}(G_{F'},K_{F'}), $$
where $\lambda\in X_*(\textbf{T})^{-}$, $(a_i,a_j)\in T_\lambda(F)$ and
$(a_i',a_j')$ is the corresponding element in $T_\lambda(F')$. The following result is due to Kazhdan (\cite[Theorem A]{MR874049}).
\begin{proposition}\label{Kaz_isom_complex}
\rm Given $m \geq 1$. There exists a positive integer $r \geq m$ such that if $F$ and $F'$ are $r$-close, the map ${\rm Kaz}_{m,\mathbb{C}}^F$, defined above, is infact an isomorphism of $\mathbb{C}$-algebras.
\end{proposition}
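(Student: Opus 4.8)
The map ${\rm Kaz}_{m,\mathbb{C}}^F$ (henceforth abbreviated ${\rm Kaz}$) is a $\mathbb{C}$-linear isomorphism by its very construction: it carries the $\mathbb{C}$-basis $\{t_g : K_FgK_F\in K_F\setminus G_F/K_F\}$ of $\mathcal{H}_{\mathbb{C}}(G_F,K_F)$ bijectively onto the analogous basis of $\mathcal{H}_{\mathbb{C}}(G_{F'},K_{F'})$, the two index sets being identified through $\coprod_{\mu}H_m/\Gamma_\mu(F)$ together with the bijections $T_\mu(F)\to T_\mu(F')$ coming from $\Lambda$. Hence the entire content of the statement is multiplicativity, and by bilinearity it suffices to prove ${\rm Kaz}(t_g*t_h)={\rm Kaz}(t_g)*{\rm Kaz}(t_h)$ for all $g,h\in G_F$. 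Writing $t_g*t_h=\sum_w c_{g,h}^w\, t_w$ with $c_{g,h}^w\in\mathbb{Z}_{\geq 0}$ (a finite sum), this is equivalent to the equality of structure constants $c_{g,h}^w=c_{g',h'}^{w'}$ for every $w$, where primes denote $\Lambda$-images.

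First I would establish the \emph{locality} of these structure constants. If $g\in G_F(\mu)$ and $h\in G_F(\nu)$, the support of $t_g*t_h$ meets only the cells $G_F(\lambda)$ for $\lambda$ in a finite set $S(\mu,\nu)\subseteq X_*(\textbf{T})^{-}$ depending only on $\textbf{G},\mu,\nu$ (a statement about products of Cartan cells, controlled by the affine Weyl group and independent of $F$). Moreover $c_{g,h}^w=\#\{\,yK_F\subseteq K_FgK_F:y^{-1}w\in K_FhK_F\,\}$; using the normality of $K_F$ in $\textbf{G}(\mathfrak{o}_F)$, the Cartan and Iwasawa decompositions, and the fact that all group elements and cosets entering this count can be represented by matrices with entries in $\varpi^{-c}\mathfrak{o}_F$ for a bound $c=c(\textbf{G},m,\mu,\nu)$, one sees that $c_{g,h}^w$ depends only on $\textbf{G}$, on $\mu$ and $\nu$, and on the finite ring $\mathfrak{o}_F/\mathfrak{p}_F^{N(\mu,\nu)}$ for some $N(\mu,\nu)=N(\textbf{G},m,\mu,\nu)\geq m$. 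Since $\Lambda$ induces, for $F,F'$ that are $N(\mu,\nu)$-close, a ring isomorphism $\mathfrak{o}_F/\mathfrak{p}_F^{N(\mu,\nu)}\xrightarrow{\sim}\mathfrak{o}_{F'}/\mathfrak{p}_{F'}^{N(\mu,\nu)}$ compatible with all of this data and with the identification of double cosets underlying ${\rm Kaz}$, we conclude $c_{g,h}^w=c_{g',h'}^{w'}$ whenever $F$ and $F'$ are $N(\mu,\nu)$-close.

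Because $N(\mu,\nu)$ a priori grows with $\mu$ and $\nu$, the locality step alone does not yield a single $r$; this is where Bernstein's finiteness theorem (\cite[Corollary 3.4]{Bernstein_center}) is used. It shows that $\mathcal{H}_{\mathbb{C}}(G_F,K_F)$ is a finitely generated module over its centre, which is itself a finitely generated commutative $\mathbb{C}$-algebra, and hence that $\mathcal{H}_{\mathbb{C}}(G_F,K_F)$ is a finitely presented $\mathbb{C}$-algebra; one may moreover choose a presentation whose generators are finite $\mathbb{C}$-linear combinations of the $t_g$ and whose finitely many defining relations, expanded in the $t$-basis, involve only finitely many $t_g$. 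By the locality step applied to the finitely many products occurring in them, these relations are determined by the ring $\mathfrak{o}_F/\mathfrak{p}_F^{R_0}$ for a suitable $R_0=R_0(\textbf{G},m)\geq m$; set $r=R_0$. For $F,F'$ that are $r$-close, $\Lambda$ then identifies this presentation with one of $\mathcal{H}_{\mathbb{C}}(G_{F'},K_{F'})$ --- that it is again a presentation, and not merely a family of valid relations, follows by comparing the dimensions of the finite-dimensional pieces of the natural filtration, which $\Lambda$ matches --- and ${\rm Kaz}$ is precisely the algebra isomorphism induced by this identification. In particular ${\rm Kaz}$ is multiplicative, which completes the proof.

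The crux is the passage from the pointwise stabilization of structure constants --- which requires a level growing with the size of the Cartan cells involved --- to one uniform $r$; this is exactly where Bernstein's finiteness is indispensable, as it turns an a priori infinite verification into a finite one, and it is the point at which the finite presentation with bounded-support generators must be chosen so as to be compatible with ${\rm Kaz}$, defined directly by $t_g\mapsto t_{g'}$. This compatibility is the technical heart of Kazhdan's argument in \cite[Section 1]{MR874049}. The same scheme applies verbatim modulo $l$, with \cite[Corollary 3.4]{Bernstein_center} replaced by the finiteness of Hecke algebras over Noetherian $\mathbb{Z}_l$-algebras (\cite[Theorem 1.1]{dat2024finiteness}), and yields Proposition~\ref{Kaz_isom_mod_l}.
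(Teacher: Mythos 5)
The paper does not actually prove Proposition~\ref{Kaz_isom_complex}; it is cited from \cite[Theorem~A]{MR874049}. The relevant in-paper argument is the proof of Theorem~\ref{Kaz_isom_mod_l}, which the paper says "exactly follows the same line of arguments" and which is the natural yardstick here. Your proposal matches that scheme in its essentials: you observe that ${\rm Kaz}_{m,\mathbb{C}}^F$ is a $\mathbb{C}$-linear bijection by construction, reduce to multiplicativity, invoke the locality of the convolution structure constants (this is Kazhdan's Lemma~1.3, encapsulated in the paper as Remark~\ref{rmk_alg_hom}), and use Bernstein's finiteness to replace an a priori infinite collection of constraints by a finite presentation, applying locality only to the finitely many products that occur there to extract a uniform $r$. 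That is precisely the structure of the paper's proof.

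The one place you deviate is the final identification. The paper shows that the algebra map $\widetilde{\varphi'}$ induced by the presentation coincides with ${\rm Kaz}_m^F$ on \emph{every} double-coset characteristic function $t_x$, not merely on the chosen generators, by invoking Lemma~\ref{lemma_conv} (the Iwahori--Matsumoto-type identities $t_{\lambda(\varpi)}*t_{\mu(\varpi)}=t_{(\lambda+\mu)(\varpi)}$ and $t_{x_1\lambda(\varpi)x_2}=t_{x_1}*t_{\lambda(\varpi)}*t_{x_2}$) together with the fact that the chosen finite set $C$ generates $X_*(\textbf{T})^{-}$ as a semigroup. You instead appeal to a dimension-count on filtered pieces to argue that the transported relations give a presentation of the target, and then assert that ${\rm Kaz}$ is the isomorphism so induced. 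That dimension-count shows injectivity of $\widetilde{\varphi'}$ but does not by itself identify $\widetilde{\varphi'}$ with the linear map ${\rm Kaz}$ (which is not a priori multiplicative); you implicitly acknowledge this by flagging the compatibility as "the technical heart of Kazhdan's argument." To close the gap one should, as the paper does, make the generating set explicit and use Lemma~\ref{lemma_conv} to propagate the equality $\widetilde{\varphi'}(t_x)={\rm Kaz}_m^F(t_x)$ from the generators to all of $X_m(F)$. Otherwise the proposal is correct and follows the same route as the paper.
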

In order to establish that the vector space isomorphism ${\rm Kaz}_{m,\mathbb{C}}^F$ is compatible with algebra structures, the fields need to be a few levels closer. Kazhdan conjectured that one could take $r=m$ in Proposition \ref{Kaz_isom_complex}--which was proved by R. Ganapathy (\cite[Corollary 3.15]{ganapathy2015local}).
	
\subsection{Kazhdan isomorphism over positive characteristic}
In this subsection, we formulate a version of Proposition \ref{Kaz_isom_complex} for mod-$l$ Hecke algebras. Recall that $\mathcal{H}(G_F,K_F)$ denotes the Hecke algebra, consisting of all $k$-valued smooth, compactly supported and bi-$K_F$-invariant functions on $G_F$. Suppose $F$ and $F'$ are $m$-close. We denote by $X_m(F)$ the set of all $K_F$ double cosets in $G_F$. Similar notations are followed for $F'$. The bijection $\phi :X_m(F)\rightarrow X_m(F')$ gives the $\mathbb{Z}$-module isomorphism (see \cite[Section 1]{MR874049}) 
of integral Hecke algebras 
$$ \mathcal{H}_{\mathbb{Z}}(G_F,K_F) \longrightarrow \mathcal{H}_{\mathbb{Z}}(G_{F'},K_{F'}) $$
$$ t_x \longmapsto t_{\phi(x)}, $$
which induces an isomorphism of $k$-vector spaces
$$  {\rm Kaz}_m^F : \mathcal{H}(G_F,K_F)\longrightarrow
\mathcal{H}(G_{F'},K_{F'}). $$
Kazhdan's approach in proving the Hecke algebra isomorphism over $\mathbb{C}$, relies on the fact that $\mathcal{H}_{\mathbb{C}}(G_F,K_F)$ is finitely presented--which 
follows from the work of \cite{Bernstein_center}. So, in order to establish that ${\rm Kaz}_m^F$ is an algebra map, we need the 
finiteness of Hecke algebras over $k$--which is made available 
due to seminal work of Dat--Helm--Kurinczuk--Moss (\cite{dat2024finiteness}).
\begin{lemma}\label{Hecke_finiteness}
For any compact open subgroup $K$ of $G_F$, the Hecke algebra $\mathcal{H}(G_F,K)$ is finitely presented.
\end{lemma}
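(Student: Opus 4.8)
The plan is to deduce finite presentation from the structure theorem of Dat--Helm--Kurinczuk--Moss together with an elementary piece of (essentially commutative) algebra.

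\textbf{Step 1: invoke the finiteness theorem.} Since $k$ is a field it is in particular a Noetherian $\mathbb{Z}_l$-algebra, and $l\ne p$ by hypothesis, so \cite[Theorem 1.1]{dat2024finiteness} applies with $R=k$ and the given compact open subgroup $K$. It shows that $\mathcal{H}(G_F,K)$ is a finitely generated module over a finitely generated commutative central $k$-subalgebra $Z_0\subseteq \mathcal{H}(G_F,K)$. Write $A:=\mathcal{H}(G_F,K)$ and fix module generators $a_1=1,a_2,\dots,a_n$ of $A$ over $Z_0$. This is the only substantive input; it is the characteristic-$l$ analogue of the Bernstein finiteness result (\cite{Bernstein_center}) that underlies Kazhdan's construction over $\mathbb{C}$.

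\textbf{Step 2: record properties of $Z_0$.} By the Hilbert basis theorem $Z_0$ is Noetherian, and writing $Z_0=k[x_1,\dots,x_d]/I$ the ideal $I$ is finitely generated, so $Z_0$ is a \emph{finitely presented} commutative $k$-algebra. Because $A$ is module-finite over the Noetherian ring $Z_0$, the kernel of the $Z_0$-linear surjection $\varphi:Z_0^{\,n}\to A$, $e_i\mapsto a_i$, is a finitely generated $Z_0$-module; fix generators $m_u=\sum_i z_{ui}e_i$ ($u=1,\dots,q$, $z_{ui}\in Z_0$). Also fix a multiplication table $a_ia_j=\sum_l c_{ijl}a_l$ with $c_{ijl}\in Z_0$, and choose polynomial lifts $\tilde c_{ijl},\tilde z_{ui}\in k[x_1,\dots,x_d]$.

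\textbf{Step 3: assemble an explicit finite presentation.} I would take the free associative $k$-algebra $F=k\langle X_1,\dots,X_d,Y_1,\dots,Y_n\rangle$ with the surjection $F\to A$ sending $X_\alpha$ to the image of $x_\alpha$ and $Y_i$ to $a_i$, and then mod out by the finite list of relations: lifts to $k[X]$ of a finite generating set of $I$; the commutators $[X_\alpha,X_{\alpha'}]$ and $[X_\alpha,Y_i]$ (valid since $Z_0$ is central); $Y_1-1$; the relations $Y_iY_j-\sum_l \tilde c_{ijl}(X)Y_l$; and the relations $\sum_i \tilde z_{ui}(X)Y_i$. Call the quotient $B$, with the induced surjection $\psi:B\to A$. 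The commutator and $Y$-multiplication relations (together with $Y_1=1$) force the image of $k[X]$ in $B$ to be a central copy of $Z_0$ and force $B=\sum_i Z_0Y_i$, so $p:Z_0^{\,n}\to B$, $e_i\mapsto Y_i$, is surjective with $\psi\circ p=\varphi$; the last set of relations gives $\ker p\supseteq(m_1,\dots,m_q)=\ker\varphi$, while trivially $\ker p\subseteq\ker(\psi\circ p)=\ker\varphi$. Hence $\ker p=\ker\varphi$, so $p$ induces an isomorphism $A=Z_0^{\,n}/\ker\varphi\xrightarrow{\sim}B$ that is inverse to $\psi$; thus $\psi$ is an isomorphism of $k$-algebras and $A$ is finitely presented.

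\textbf{On the main obstacle.} There is essentially no obstacle once Step 1 is granted: the whole weight of the lemma rests on the finiteness theorem of \cite{dat2024finiteness}. The only thing requiring care is the bookkeeping in Step 3 showing that the displayed finite set of relations already cuts out $A$ — equivalently, the standard passage from ``module-finite over a finitely presented Noetherian central subalgebra'' to ``finitely presented as a $k$-algebra'' — and the single genuinely used hypothesis there is the Noetherianity of $Z_0$, which guarantees that $\ker\varphi$ is finitely generated.
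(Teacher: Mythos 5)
Your proof is correct and takes a genuinely different, and more careful, route than the paper's. The paper disposes of the lemma in two lines by asserting that a finitely generated Noetherian $k$-algebra is automatically finitely presented, and then citing \cite[Theorem 1.1 and Corollary 1.4]{dat2024finiteness} for generation and Noetherianity. For \emph{commutative} algebras the asserted implication is Hilbert's basis theorem, but in the non-commutative setting it is not a formal consequence: the free associative algebra $k\langle X_1,\dots,X_s\rangle$ is not Noetherian for $s\ge 2$, so Noetherianity of a quotient does not by itself force the defining two-sided ideal to be finitely generated. Your argument sidesteps this subtlety entirely by working directly with the actual structural content of \cite[Theorem 1.1]{dat2024finiteness} — that $\mathcal{H}(G_F,K)$ is module-finite over a finitely generated commutative central $k$-subalgebra $Z_0$ — and your Step 3 carries out, correctly and in full, the standard algebra needed to pass from that structure to a finite presentation, the only genuine input being the Noetherianity of $Z_0$ (hence Hilbert's basis theorem applied to $k[x_1,\dots,x_d]$). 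Both proofs ultimately rest on Dat--Helm--Kurinczuk--Moss, but the paper's version is shorter at the cost of leaning on an unproved general principle, whereas yours supplies the missing justification and, as a side benefit, has no need for \cite[Corollary 1.4]{dat2024finiteness} at all.
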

\begin{proof}
Note that an algebra $\mathcal{A}$ is finitely presented if $\mathcal{A}$ is finitely generated and Noetherian. Then the 
above lemma follows from \cite[Theorem 1.1 and Corollary 1.4]{dat2024finiteness}.
\end{proof}
\begin{remark}\label{rmk_alg_hom}
\rm We now recall some results from \cite{MR874049}. Let $C$ be a 
finite subset of $X_*(\textbf{T})^{-}$ and let 
$$ G_C = \bigcup_{\mu\in C}G_F(\mu). $$
By \cite[Lemma 1.3]{MR874049}, there exists a positive integer 
$n_C \geq m$, such that for all $g\in G_C$, we have 
$$ gK_{n_C}(F)g^{-1} \subseteq K_F, $$ 
where $K_{n_C}(F)$ is the $n_C$-th usual congruence subgroup of $G_F$. Moreover, if $F$ and $F'$ are $n_C$-close, then
\begin{equation}\label{kaz_product_level}
{\rm Kaz}_m^F(f_1*f_2) = {\rm Kaz}_m^F(f_1) * {\rm Kaz}_m^F(f_2),
\end{equation} 
for all $f_1,f_2\in \mathcal{H}(G_F,K_F)$ supported on $G_C$.
\end{remark} 
The following lemma, which was essential for 
\cite[Theorem A]{MR874049}, is valid over $k$ as well. 
\begin{lemma}\label{lemma_conv}
Let $C$ be a finite subset of $X_*(\textbf{T})^{-}$. Then 
\begin{enumerate}
	\item for any $\lambda, \mu \in X_*(\textbf{T})^{-}$, we have
	$$ t_{\lambda(\varpi)} * t_{\mu(\varpi)} = 
	t_{(\lambda+\mu)(\varpi)} $$
	\item for all $x_1,x_2\in G_C$, we have
	$$ t_{x_1\lambda(\varpi) x_2} = t_{x_1}*t_{\lambda(\varpi)}*
	t_{x_2}. $$
\end{enumerate}
\end{lemma}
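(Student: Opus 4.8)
The plan is to prove each identity by showing that the product on the right is the characteristic function of a single $K_F$-double coset, in two steps: (i) confine the support of the left-hand side to one double coset --- $K_F\varpi_{\lambda+\mu}K_F$ in (1), and $K_Fx_1\varpi_\lambda x_2K_F$ in (2) --- which together with the obvious reverse containment determines the support exactly; (ii) since the product is then bi-$K_F$-invariant and supported on one double coset, it is a constant multiple of the expected $t$, and I would pin down the constant as $1$ by comparing total masses for $\mu_F$. Every quantity here is a number of left cosets, so it is enough to argue with $\mathbb Z$-coefficients; the identities then descend to $k$ by base change, which is the sense in which the lemma is valid over $k$ as well.

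For (1) the tool is the Iwahori factorization of the congruence subgroup. Let $\textbf U^-$ be the unipotent radical of the Borel opposite to $\textbf B$, put $\textbf U^+:=\textbf U$, and set $U^\pm_F:=K_F\cap\textbf U^\pm(F)$, $T_F:=K_F\cap\textbf T(F)$, so that $K_F=U^-_F\,T_F\,U^+_F$. For $\lambda\in X_*(\textbf T)^{-}$ a root-by-root computation (using $\langle\alpha,\lambda\rangle\le 0$ for $\alpha\in\Phi^+$, hence $\langle\beta,\lambda\rangle\ge 0$ for $\beta\in\Phi^-$) gives $\varpi_\lambda U^-_F\varpi_\lambda^{-1}\subseteq U^-_F$ and $\varpi_\lambda^{-1}U^+_F\varpi_\lambda\subseteq U^+_F$, while $\varpi_\lambda$ centralizes $T_F$. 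Hence for antidominant $\lambda,\mu$,
$$\varpi_\lambda K_F\varpi_\mu=\big(\varpi_\lambda U^-_F\varpi_\lambda^{-1}\big)\,T_F\,\varpi_{\lambda+\mu}\,\big(\varpi_\mu^{-1}U^+_F\varpi_\mu\big)\subseteq K_F\varpi_{\lambda+\mu}K_F,$$
so $\operatorname{supp}(t_{\varpi_\lambda}*t_{\varpi_\mu})\subseteq K_F\varpi_\lambda K_F\varpi_\mu K_F\subseteq K_F\varpi_{\lambda+\mu}K_F$, and since $\varpi_{\lambda+\mu}=\varpi_\lambda\varpi_\mu$ lies in that set the support is exactly $K_F\varpi_{\lambda+\mu}K_F$. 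For the masses, the same factorization (with the standard compatibility of Iwahori factorizations under intersection) gives $K_F\cap\varpi_\nu K_F\varpi_\nu^{-1}=\big(\varpi_\nu U^-_F\varpi_\nu^{-1}\big)T_FU^+_F$, so
$$\mu_F(K_F\varpi_\nu K_F)=[K_F:K_F\cap\varpi_\nu K_F\varpi_\nu^{-1}]=[U^-_F:\varpi_\nu U^-_F\varpi_\nu^{-1}]=q_F^{\,-\langle 2\rho,\nu\rangle},$$
with $q_F=|\mathfrak o_F/\mathfrak p_F|$ and $2\rho=\sum_{\alpha\in\Phi^+}\alpha$; this is multiplicative in $\nu$ on the antidominant monoid. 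As $\int_{G_F}t_{\varpi_\lambda}*t_{\varpi_\mu}=\big(\int t_{\varpi_\lambda}\big)\big(\int t_{\varpi_\mu}\big)=q_F^{-\langle2\rho,\lambda+\mu\rangle}=\int t_{\varpi_{\lambda+\mu}}$, the constant is $1$.

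For (2) I would reduce to (1) via the Cartan decomposition (\ref{CD}) and the normality of $K_F$ in $\textbf G(\mathfrak o_F)$. Writing $x_i=a_i\varpi_{\mu_i}b_i$ with $a_i,b_i\in\textbf G(\mathfrak o_F)$ and $\mu_i\in C$, one has $K_Fx_iK_F=a_i\big(K_F\varpi_{\mu_i}K_F\big)b_i$, so $t_{x_i}$ is a left-and-right translate of $t_{\varpi_{\mu_i}}$; pushing these translations through the convolution (they commute with $*$, and pass from one factor to an adjacent one) reduces the identity to one of the form $t_{\varpi_{\mu_1}}*t_g*t_{\varpi_{\mu_2}}=t_{\varpi_{\mu_1}g\varpi_{\mu_2}}$ with $g\in\textbf G(\mathfrak o_F)\varpi_\lambda\textbf G(\mathfrak o_F)$. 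One then runs the same two steps: the Iwahori-factorization estimates of (1), applied to $\varpi_{\mu_1}$ and $\varpi_{\mu_2}$, confine the support to the single double coset $K_F\varpi_{\mu_1}g\varpi_{\mu_2}K_F$, and the index computation of (1) pins the constant to $1$ by a mass comparison; again everything descends to $k$.

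The step I expect to be the main obstacle is the support computation in (2): one must show the a priori larger product $K_Fx_1K_F\varpi_\lambda K_Fx_2K_F$ collapses onto $K_Fx_1\varpi_\lambda x_2K_F$. This needs, beyond antidominance of $\lambda$, that $\varpi_\lambda$ is sufficiently deep in the antidominant direction relative to the finite set $C$ --- in the sense of \cite[Lemma 1.3]{MR874049}, already used in Remark \ref{rmk_alg_hom} --- so that conjugation by $\varpi_\lambda$ absorbs the $\textbf G(\mathfrak o_F)$-factors of $g$ into $K_F$; without such a hypothesis the identity fails (for instance, for $\textbf G=GL_2$ with $\lambda=0$, $x_1=\varpi_{(0,1)}$ and $x_2=\varpi_{(0,1)}^{-1}$ one has $x_1\varpi_\lambda x_2=1$ while $t_{x_1}*t_{x_2}$ has total mass $q_F^{\,2}>1$). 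Making this depth of $\lambda$ explicit in terms of $C$ is the technical core of the argument.
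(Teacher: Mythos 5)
The paper's own proof of this lemma is a one-line citation to \cite[Theorem 2]{MR0304563} ``by extension of scalars,'' so there is no argument in the paper to compare against line by line. Your direct proof of part (1) via the Iwahori factorization $K_F=U^-_FT_FU^+_F$ is correct: the root-by-root estimates $\varpi_\lambda U^-_F\varpi_\lambda^{-1}\subseteq U^-_F$ and $\varpi_\lambda^{-1}U^+_F\varpi_\lambda\subseteq U^+_F$ (for antidominant $\lambda$) do confine the support of $t_{\varpi_\lambda}*t_{\varpi_\mu}$ to $K_F\varpi_{\lambda+\mu}K_F$, the mass formula $\mu_F(K_F\varpi_\nu K_F)=q_F^{-\langle2\rho,\nu\rangle}$ is right, and its multiplicativity pins the constant at $1$; working integrally and then reducing mod $l$ is exactly the ``extension of scalars'' the paper alludes to. This is a genuinely self-contained argument where the paper only cites.

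More importantly, you have put your finger on a real gap in the \emph{statement} of part (2). As written, the variable $\lambda$ in (2) is unquantified relative to $C$, and for arbitrary antidominant $\lambda$ the identity $t_{x_1\varpi_\lambda x_2}=t_{x_1}*t_{\varpi_\lambda}*t_{x_2}$ fails for $x_1,x_2\in G_C$: your $\mathrm{GL}_2$ example with $\lambda=0$, $x_1=\mathrm{diag}(1,\varpi)$, $x_2=\mathrm{diag}(1,\varpi^{-1})$ is a valid counterexample, since the right side has total integral mass $q_F^2$ while the left side has mass $1$. The correct statement (which is what Kazhdan actually proves in \cite[Section 1]{MR874049}, and what the downstream generation argument in the proof of Theorem \ref{Kaz_isom_mod_l} genuinely requires) must either (a) impose that $\lambda$ is sufficiently antidominant relative to $C$ in the sense of the constant $n_C$ of Remark \ref{rmk_alg_hom}, so that conjugation by $\varpi_\lambda$ absorbs the $\textbf{G}(\mathfrak o_F)$-parts of $x_1,x_2$ into $K_F$, or (b) restrict $x_1,x_2$ to $\textbf{G}(\mathfrak{o}_F)$ --- in which case the identity is immediate from normality of $K_F$ in $\textbf{G}(\mathfrak o_F)$ and holds for all $\lambda$; this weaker form is in fact all that the proof of Theorem \ref{Kaz_isom_mod_l} uses. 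So your proposal is correct as far as it goes, and it exposes an imprecision in the lemma that the paper's citation-only proof does not address; the remaining work you flag --- making the depth of $\lambda$ explicit in terms of $C$ --- is exactly the content of Kazhdan's original lemma.
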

\begin{proof}
It follows from \cite[Theorem 2]{MR0304563} by extension of scalars.
\end{proof}
From these, we obtain Kazhdan isomorphism of mod-$l$ Hecke algebras. The proof exactly follows the same line of arguments of 
\cite[Theorem A]{MR874049}.
\begin{theorem}\label{Kaz_isom_mod_l}
Given an integer $m \geq 1$. There exists a positive integer 
$r \geq m$ such that if $F$ and $F'$ are $r$-close, the map 
${\rm Kaz}_m^F$ is an isomorphism of $k$-algebras.
\end{theorem}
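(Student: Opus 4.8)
The plan is to run Kazhdan's original argument from \cite[Theorem A]{MR874049} verbatim, the only new input being that all the relevant statements have been checked to hold over $k$ in Lemmas \ref{Hecke_finiteness} and \ref{lemma_conv} and Remark \ref{rmk_alg_hom}. First I would recall that ${\rm Kaz}_m^F$ is already known to be an isomorphism of $k$-vector spaces, so the entire content is to show it is multiplicative once $F$ and $F'$ are taken sufficiently close; surjectivity and injectivity as a ring map are then automatic. By Lemma \ref{Hecke_finiteness}, $\mathcal{H}(G_F,K_F)$ is finitely generated as a $k$-algebra, say by $t_{g_1},\dots,t_{g_s}$ with each $g_i$ lying in some double coset indexed by $\mu_i\in X_*(\textbf{T})^-$. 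Using Lemma \ref{lemma_conv}(2), each generator $t_{g_i}$ can moreover be rewritten as a product $t_{x}*t_{\mu_i(\varpi)}*t_{y}$ with $x,y$ in $\textbf{G}(\mathfrak{o}_F)$, and the multiplication table among the $t_{g_i}$ involves only finitely many double cosets; collect all the $\mu\in X_*(\textbf{T})^-$ that occur, together with those needed to express the structure constants, into one finite set $C\subseteq X_*(\textbf{T})^-$.

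Next I would invoke Remark \ref{rmk_alg_hom}: for this finite set $C$ there is an integer $n_C\ge m$ such that, whenever $F$ and $F'$ are $n_C$-close, equation \eqref{kaz_product_level} holds for all $f_1,f_2\in\mathcal{H}(G_F,K_F)$ supported on $G_C$. In particular ${\rm Kaz}_m^F(t_{g_i}*t_{g_j})={\rm Kaz}_m^F(t_{g_i})*{\rm Kaz}_m^F(t_{g_j})$ for all the chosen generators. I would then argue that multiplicativity on a generating set, together with additivity (which is built into the $\mathbb{Z}$-module isomorphism \eqref{Kaz_isom_integral}), forces multiplicativity everywhere: any element of $\mathcal{H}(G_F,K_F)$ is a $k$-linear combination of the $t_{g_i}$, and since ${\rm Kaz}_m^F$ sends the (finite) multiplication table of the generators over to the corresponding table on the $F'$ side — here one uses that $\Lambda$ identifies $T_\mu(F)$ with $T_\mu(F')$ compatibly, so the structure constants match — the two $k$-algebras are presented by the same generators and relations, and ${\rm Kaz}_m^F$ is the induced algebra isomorphism. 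Thus $r:=n_C$ works.

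The one subtlety, and the step I expect to require the most care, is the bookkeeping that shows a \emph{single} finite set $C$ (hence a single $n_C$) suffices to control \emph{all} products, not just products of basis elements supported on a prescribed $G_C$. The resolution is exactly Kazhdan's: finiteness of the algebra (Lemma \ref{Hecke_finiteness}) means there are finitely many generators and finitely many relations, and each relation is an identity among finitely many $t_g$'s, so only finitely many double cosets — equivalently finitely many $\mu$'s — are involved in total; enlarging $C$ to contain all of these, and also all $\mu$ appearing when a product $t_{g_i}*t_{g_j}$ is expanded in the basis, makes $G_C$ large enough that \eqref{kaz_product_level} applies to every product that matters. One should also note that the congruence condition ``$gK_{n_C}(F)g^{-1}\subseteq K_F$ for all $g\in G_C$'' from \cite[Lemma 1.3]{MR874049} is what makes the level $n_C$ uniform over the whole of $G_C$, so no further closeness beyond $n_C$-closeness is needed. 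With these observations the proof is complete, following \cite[Theorem A]{MR874049} line by line.
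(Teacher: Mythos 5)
Your proposal takes essentially the same route as the paper — finite presentation from Lemma \ref{Hecke_finiteness}, the convolution identities of Lemma \ref{lemma_conv}, and Remark \ref{rmk_alg_hom} to control multiplicativity on a fixed compactum — but the middle paragraph contains a real gap, and the last paragraph only partially repairs it.

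The incorrect step is the assertion that ``any element of $\mathcal{H}(G_F,K_F)$ is a $k$-linear combination of the $t_{g_i}$'' and that therefore multiplicativity on the generating set together with additivity forces multiplicativity everywhere. The Hecke algebra is infinite-dimensional over $k$; the finitely many $t_{g_i}$ generate it as a \emph{$k$-algebra}, not as a $k$-vector space. Knowing ${\rm Kaz}_m^F(t_{g_i}*t_{g_j})={\rm Kaz}_m^F(t_{g_i})*{\rm Kaz}_m^F(t_{g_j})$ for pairs of generators does not by itself propagate to longer products $t_{g_{i_1}}*\cdots*t_{g_{i_a}}$, and hence does not give multiplicativity on arbitrary elements.

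What actually closes the gap — and this is the step the paper makes explicit — is the finite \emph{presentation}, not merely finite generation: one writes $\mathcal{H}(G_F,K_F)$ as $k\langle X_1,\dots,X_s\rangle/(f_1,\dots,f_d)$, sets $M=\max_i\deg f_i$, and chooses the finite set $B\subseteq X_*(\textbf{T})^-$ so that $X_B$ contains all products of $\leq M$ elements of $X_0$. With $r=n_B$ from Remark \ref{rmk_alg_hom}, one then has enough multiplicativity to show that the $t_{\phi(x_j)}$ satisfy the relations $f_i$ in $\mathcal{H}(G_{F'},K_{F'})$. This gives, by the universal property of the quotient, an \emph{algebra} map $\widetilde{\varphi'}:\mathcal{H}(G_F,K_F)\to\mathcal{H}(G_{F'},K_{F'})$, and one then separately checks, using Lemma \ref{lemma_conv} and the semigroup-generating property of $C$, that $\widetilde{\varphi'}(t_x)=t_{\phi(x)}={\rm Kaz}_m^F(t_x)$ for every basis element $t_x$, so that $\widetilde{\varphi'}={\rm Kaz}_m^F$. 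Your last paragraph gestures at the finite-relations idea and the uniformity coming from \cite[Lemma 1.3]{MR874049}, but it does not articulate the degree bound $M$, the universal-property step that produces the algebra map, or the final verification that this algebra map is ${\rm Kaz}_m^F$ on the whole (infinite) basis — these are the three missing pieces.
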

\begin{proof}
We will show that ${\rm Kaz}_m^F$ preserves the ring structures. 
Fix a finite subset $C\subseteq X_*(\textbf{T})^{-}$ 
containing $0$, such that $C$ generates $X_*(\textbf{T})^{-}$ as 
a semigroup. Consider the finite set 
$$ X_0 = \bigcup_{\lambda \in C} X_\lambda, $$
where $X_\lambda =\big\{K_FgK_F \mid g\in  \textbf{G}(\mathfrak{o}_F)\lambda(\varpi) \textbf{G}(\mathfrak{o}_F)\big\}$. 
It follows from Lemma \ref{lemma_conv} that the algebra $\mathcal{H}(G_F,K_F)$ is generated by the set $\big\{t_g:g\in X_0\big\}$, where $t_g$ denotes the characteristic function on the 
double coset $K_FgK_F$. Assume that $X_0 = 
\{x_1,\dots, x_s\}$. Let $k\langle X_1,\dots,X_s\rangle$ denote 
the non-commutative polynomial algebra (free algebra) with generators $X_1,\cdots,X_s$. Since the Hecke algebra $\mathcal{H}(G_F,K_F)$ is finitely presented, there exists a surjective $k$-algebra homomorphism
$$ \varphi : k\langle X_1\dots,X_s\rangle \rightarrow \mathcal{H}(G_F,K_F) $$
$$ X_i \longmapsto t_{x_i} $$
such that ${\rm Ker}(\varphi)$ is a two-sided ideal generated by non-commutative polynomials $f_1, f_2,\dots, f_d \in k\langle X_1,\dots,X_s\rangle$. Let 
$$ M = {\rm max}\big\{a_i : 1\leq i\leq d\big\}, $$ 
where $a_i$ is the degree of the polynomial $f_i$. Then there exists 
a finite set $B\subseteq \Phi^+$ such that 
$$ X_B = \bigcup_{\lambda\in B} X_\lambda $$ 
contains a compact set of the form $\big\{g_1g_2\dots g_M: g_i\in X_0\big\}$. By Remark \ref{rmk_alg_hom}, there is an integer 
$r=r_B\geq m$, such that if $F$ and $F'$ are $r$-close, we have 
$$ {\rm Kaz}_m^F(f_1*f_2) = {\rm Kaz}_m^F(f_1)*{\rm Kaz}_m^F(f_2), $$
for all functions $f_1, f_2$ supported on $G_B$. Now, for each $1\leq i\leq d$, the above identity gives 
$$ f_i\big({\rm Kaz}_m^F(t_{x_1}),\dots, 
{\rm Kaz}_m^F(t_{x_s})\big) = 0 $$
or, equivalently
$$ f_i\big(t_{\phi(x_1)},\dots,t_{\phi(x_s)}\big) = 0. $$
Consider the homomorphism 
$$ \varphi':k\langle X_1,\dots,X_s\rangle \longrightarrow \mathcal{H}(G_{F'},K_{F'}) $$
$$ X_j \longmapsto t_{\phi(x_j)}. $$
Note that $\varphi'$ factors through ${\rm Ker}(\varphi)$, and 
hence we get a unique algebra morphism
$$ \widetilde{\varphi'} : \mathcal{H}(G_F,K_F)\longrightarrow \mathcal{H}(G_{F'},K_{F'}) $$
such that $\widetilde{\varphi'}(t_{x_j}) = t_{\phi(x_j)}$, 
for each $j$. Since $C$ generates $X_*(\textbf{T})^{-}$ as a semigroup, 
using the identities in Lemma \ref{lemma_conv}, we get that 
$$ \widetilde{\varphi'}(t_x) = t_{\phi(x)} = {\rm Kaz}_m^F(t_x), $$
for all $x\in X_m(F)$. This shows that $\widetilde{\varphi'} = {\rm Kaz}_m^F$, which is an algebra morphism. Hence, the theorem.
\end{proof}
	
\subsection{Galois extension of close local fields}\label{galois_comp}
Let $F$ and $F'$ be two non-Archimedean $m$-close local fields with residue characteristic $p$. Deligne (\cite[Section 3.5]{Deligne_Galois_rep}) proved that there is an isomorphism 
$$ 
{\rm Gal}(\overline{F}/F)/\mathcal{I}_F^m\xrightarrow{{\rm Del}_m}{\rm Gal}(\overline{F'}/F')/\mathcal{I}_{F'}^m,
$$
where $\overline{F}$ (resp. $\overline{F'}$) is the seperable closure of $F$ (resp. $F'$), and $\mathcal{I}_F^m$ (resp. $\mathcal{I}_{F'}^m$) is the $m$-th higher ramification subgroup of the inertia group $\mathcal{I}_F$ (resp. $\mathcal{I}_{F'}$). Let $E$ be a finite Galois extension of $F$ of prime degree $l$ with $l\ne p$. Following the description of the isomorphism ${\rm Del}_m$ (see \cite[Section 1.3]{Deligne_Galois_rep} and \cite[Section 2.1]{ganapathy2015local}), we have a finite Galois extension $E'/F'$ of same degree $l$. Moreover, the extension $E'/F'$ can be so constructed that the fields $E$ and $E'$ are also sufficiently close.
\begin{lemma}\label{lemma_close}
Let $F$ and $F'$ be two $m$-close non-Archimedean local fields with residue characteristic $p$. Let $E$ be a finite Galois extension of $F$ with $[E:F]=l$, where $l$ and $p$ are distinct primes. Then there exists a finite Galois extension $E'$ of $F'$ of degree $l$ such that
$E'$ is $m$-close to $E$ if $E/F$ is unramified, and $E'$ is $lm$-close to $E$ if $E/F$ is totally ramified.
\end{lemma}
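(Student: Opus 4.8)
Since $[E:F]=l$ is prime and $l\ne p$, the extension $E/F$ is either unramified or tamely totally ramified, and I would handle the two cases separately. In each case the idea is to write $\mathfrak{o}_E=\mathfrak{o}_F[x]/(g)$ for a suitable monic polynomial $g$, transport $g\bmod\mathfrak{p}_F^m$ along $\Lambda$ to a monic $g'\in\mathfrak{o}_{F'}[x]$, set $E'=F'[x]/(g')$, and then check that $\mathfrak{o}_E/\mathfrak{p}_E^{em}$ and $\mathfrak{o}_{E'}/\mathfrak{p}_{E'}^{em}$ are both computed from that single datum mod $\mathfrak{p}^m$. I would use freely that $\Lambda$ induces a ring isomorphism on residue fields $k_F\xrightarrow{\sim}k_{F'}$ (so $|k_F|=|k_{F'}|$) and, acting on coefficients, a ring isomorphism $(\mathfrak{o}_F/\mathfrak{p}_F^m)[x]\xrightarrow{\sim}(\mathfrak{o}_{F'}/\mathfrak{p}_{F'}^m)[x]$.

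\emph{Unramified case.} By Hensel's lemma $\mathfrak{o}_E=\mathfrak{o}_F[x]/(g)$ with $g$ monic of degree $l$ and $\bar g\in k_F[x]$ irreducible. I would lift $\Lambda(g\bmod\mathfrak{p}_F^m)$ to a monic $g'\in\mathfrak{o}_{F'}[x]$; then $g'\bmod\mathfrak{p}_{F'}$ is the image of $\bar g$ under $k_F\xrightarrow{\sim}k_{F'}$, hence irreducible, so $E':=F'[x]/(g')$ is the unramified degree-$l$ extension of $F'$ (automatically Galois) with $\mathfrak{o}_{E'}=\mathfrak{o}_{F'}[x]/(g')$. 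As both extensions are unramified, $\mathfrak{p}_E^m=\mathfrak{p}_F^m\mathfrak{o}_E$ and likewise for $E'$, so
\[
\mathfrak{o}_E/\mathfrak{p}_E^m=(\mathfrak{o}_F/\mathfrak{p}_F^m)[x]/(g\bmod\mathfrak{p}_F^m),\qquad \mathfrak{o}_{E'}/\mathfrak{p}_{E'}^m=(\mathfrak{o}_{F'}/\mathfrak{p}_{F'}^m)[x]/(g'\bmod\mathfrak{p}_{F'}^m),
\]
and $\Lambda$ carries the first ring onto the second; thus $E$ and $E'$ are $m$-close, as claimed ($e=1$).

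\emph{Tamely totally ramified case.} Here I would first invoke the structure of tame totally ramified extensions to write $E=F(\pi)$ with $\pi^l=c\varpi$, $c\in\mathfrak{o}_F^\times$, so that $g=x^l-c\varpi$ is Eisenstein, $\pi$ is a uniformizer of $E$, and $\mathfrak{o}_E=\mathfrak{o}_F[\pi]=\mathfrak{o}_F[x]/(g)$. Galois-ness of $E/F$ forces $\mu_l\subseteq F$; since $l\ne p$, $\mu_l$ embeds into $k_F^\times$, so $l\mid|k_F|-1=|k_{F'}|-1$, and hence $\mu_l\subseteq F'$ by Hensel. Choosing $c'\in\mathfrak{o}_{F'}^\times$ that lifts the unit $\Lambda(c\bmod\mathfrak{p}_F^m)$ and setting $g'=x^l-c'\varpi'$ (again Eisenstein), $E'=F'[x]/(g')$ is a totally ramified degree-$l$ extension with $\mathfrak{o}_{E'}=\mathfrak{o}_{F'}[x]/(g')$, Galois because $\mu_l\subseteq F'$. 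Since $e(E/F)=l$, one has $\mathfrak{p}_E^{lm}=\varpi^m\mathfrak{o}_E=\mathfrak{p}_F^m\mathfrak{o}_E$ and likewise for $E'$, so
\[
\mathfrak{o}_E/\mathfrak{p}_E^{lm}=(\mathfrak{o}_F/\mathfrak{p}_F^m)[x]/(x^l-c\varpi),\qquad \mathfrak{o}_{E'}/\mathfrak{p}_{E'}^{lm}=(\mathfrak{o}_{F'}/\mathfrak{p}_{F'}^m)[x]/(x^l-c'\varpi').
\]
Because $\Lambda(\varpi\bmod\mathfrak{p}_F^m)=\varpi'\bmod\mathfrak{p}_{F'}^m$ by our choice of uniformizers, $\Lambda$ sends $x^l-c\varpi$ to $x^l-c'\varpi'$ and identifies the two rings, so $E$ and $E'$ are $lm$-close ($e=l$).

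\emph{Where the difficulty lies.} The argument is essentially bookkeeping once the right normal forms are in place; the two things that need care are (i) reducing the totally ramified Galois case to $g=x^l-c\varpi$, which is precisely what makes $\mathfrak{o}_E$ visibly $\mathfrak{o}_F[x]/(g)$ and makes Galois-ness of $E'$ transparent through Kummer theory, and (ii) verifying that transporting Galois-ness costs nothing beyond $l\mid|k_F|-1$, so that no closeness level past $m$ (resp. $lm$) is consumed. Granting those, the isomorphism $\mathfrak{o}_E/\mathfrak{p}_E^{em}\xrightarrow{\sim}\mathfrak{o}_{E'}/\mathfrak{p}_{E'}^{em}$ falls out, since each side depends only on $\mathfrak{o}_F/\mathfrak{p}_F^m$ (resp. its $\Lambda$-image) together with the reduction of the defining polynomial mod $\mathfrak{p}_F^m$.
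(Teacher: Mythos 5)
Your proof is correct and follows the same route as the paper's: in both cases one writes $\mathfrak{o}_E=\mathfrak{o}_F[T]/(g)$ for a suitable monic $g$, transports $g\bmod\mathfrak{p}_F^m$ along $\Lambda$, lifts to an integral polynomial over $\mathfrak{o}_{F'}$, and identifies $\mathfrak{o}_E/\mathfrak{p}_E^{em}$ with $\mathfrak{o}_{E'}/\mathfrak{p}_{E'}^{em}$. You supply a detail the paper leaves implicit—the check that $E'/F'$ is Galois in the totally ramified case, via $\mu_l\subseteq F\Rightarrow l\mid|k_F|-1=|k_{F'}|-1\Rightarrow\mu_l\subseteq F'$—and you are slightly more careful in allowing $g=x^l-c\varpi$ with $c\in\mathfrak{o}_F^\times$ rather than normalizing to $T^l-\varpi$ outright, but these are refinements, not a different argument.
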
	
\begin{proof}
The lemma follows from \cite{Deligne_Galois_rep}. For completeness, we give a sketch of the proof here. The proof is in two parts. In the first part, we consider 
the situation where $E$ is unramified over $F$. The second part deals with totally ramified case. Note that the extension $E/F$ is tamely ramified as $l\ne p$. We denote by $A_m$ and $A_m'$ the quotient rings $\mathfrak{o}_F/\mathfrak{p}_F^m$ and $\mathfrak{o}_{F'}/\mathfrak{p}_{F'}^m$ respectively. Recall that the isomorphism $\Lambda : A_m\xrightarrow{\sim} A_m'$ takes the class of $\varpi$ to that of $\varpi'$. 
\subsubsection{Unramified case}\label{Galois_unramified}
Suppose $E/F$ is unramified. In this case, the field $E$ is of the form $F[T]/(f)$, where $f$ is a monic irreducible polynomial with coefficients in $\mathfrak{o}_F$ such that its ${\rm mod}-\mathfrak{p}_F$ reduction, say $\overline{f}$, is irreducible. Moreover, the quotient ring $\mathfrak{o}_E/\mathfrak{p}_E^m$ is isomorphic to $A_m[T]/(\overline{f})$. Let $\overline{f}'$ be the image of $\overline{f}$ under the ring isomorphism $\Lambda$. Then we have  
$$
\mathfrak{o}_E/\mathfrak{p}_E^m\xrightarrow{\sim}
A_m'[T]/(\overline{f}').
$$
Let $f'$ be an irreducible polynomial in $\mathfrak{o}_{F'}[T]$ with its ${\rm mod}-\mathfrak{p}_F^m$ reduction $\overline{f}'$. Let $E'$ be the field extension $F'[T]/(f')$. Then $E'$ is unramified over $F'$, and there is a ring isomorphism
$$
\mathfrak{o}_{E'}/\mathfrak{p}_{E'}^m\simeq A_m'[T]/(\overline{f}').
$$
Thus we have the ring isomorphism  $\mathfrak{o}_E/\mathfrak{p}_E^m\xrightarrow{\sim}\mathfrak{o}_{E'}/\mathfrak{p}_{E'}^m$, which implies that the fields $E$ and $E'$ are $m$-close.
\subsubsection{Totally ramified case}\label{Galois_Totally_ram} 
Suppose $E$ is totally ramified over $F$. Here the field $E$ is of the form $F[T]/(T^l-\varpi)$ and its ring of integers  $\mathfrak{o}_E$ is given by $\mathfrak{o}_F[T]/(T^l-\varpi)$. Recall that the class of $\varpi$ corresponds to the class of $\varpi'$ via the isomorphism $\Lambda$. Now consider the field extension, 
$$
E':=F[T]/(T^l-\varpi')
$$ 
Then $E'$ is totally ramified over $F'$ with the ring of integers $\mathfrak{o}_{E'}=\mathfrak{o}_{F'}[T]/(T^l-\varpi')$. 
Let $\overline{\varpi}$ be the image of $\varpi$ under ${\rm mod}-\mathfrak{p}_F^m$. Then, the natural surjection 
$$
\mathfrak{o}_F[T]/(T^l-\varpi)\longrightarrow A_m[T]/
(T^l-\overline{\varpi})
$$ 
factorizes through the ideal $\mathfrak{p}_F^m[T]/(T^l-\varpi)$, 
and it gives the ring isomorphism
$$
\mathfrak{o}_E/\mathfrak{p}_E^{lm}\xrightarrow{\sim}
A_m[T]/(T^l-\overline
{\varpi}).
$$ 
Similarly, there is a ring isomorphism,
$$
\mathfrak{o}_{E'}/\mathfrak{p}_{E'}^{lm}
\simeq A_m'[T]/(T^l-\overline{\varpi'}),
$$
where $\overline{\varpi'}$ is the image of $\varpi'$ under the ${\rm mod}-\mathfrak{p}_{F'}^m$ reduction map. On the other hand, the map $\Lambda$ induces the ring isomorphism 
$$
A_m[T]/(T^l-\overline{\varpi})\xrightarrow{\sim} A_m'[T]/
(T^l-\overline{\varpi'}).
$$ 
This shows that the ring   
$\mathfrak{o}_E/\mathfrak{p}_E^{lm}$ is isomorphic to
$\mathfrak{o}_{E'}/\mathfrak{p}_{E'}^{lm}$, and hence the fields $E$ and $E'$ are $lm$-close. 
\end{proof}
\begin{remark}
\rm Let $F$, $F'$, $E$ and $E'$ be as in Lemma \ref{lemma_close}. We simply write $E$ and $E'$ are $em$-close, where we define $e=1$ if $E/F$ is unramified and $e=l$ if $E/F$ is totally ramified. We fix a ring isomorphism $\Pi:\mathfrak{o}_E/\mathfrak{p}_E^{em}\rightarrow\mathfrak{o}_{E'}/\mathfrak{p}_{E'}^{em}$ as above, induced by $\Lambda$, which makes commutative the diagram 
$$
\xymatrix{
	\mathfrak{o}_F/\mathfrak{p}_F^m \ar[dd]_{}  \ar[rr]^{\Lambda}  &&
	\mathfrak{o}_{F'}/\mathfrak{p}_{F'}^m \ar[dd]^{} \\\\
	\mathfrak{o}_E/\mathfrak{p}_E^{em}  \ar[rr]_{\Pi} &&
	\mathfrak{o}_{E'}/\mathfrak{p}_{E'}^{em}}
$$
Here, the vertical arrows are the natural inclusion maps. We choose uniformizer $\pi$ (resp. $\pi'$) of the field $E$ (resp. $E'$) such that the class of $\pi$ is mapped to the class of $\pi'$ under the isomorphism $\Pi$. For any $\overline{a}\in\mathfrak{o}_F/\mathfrak{p}_F^m$ (or, $\mathfrak{o}_E/\mathfrak{p}_E^{em}$), we write $\overline{a}'$ for the image $\Lambda(\overline{a})$ (or, $\Pi(\overline{a})$), whenever no confusion arises.
\end{remark}
	
\begin{remark}\label{Galois_isom}
\rm Let $\Gamma$ and $\Gamma'$ denote the Galois groups ${\rm Gal}(E/F)$ and ${\rm Gal}(E'/F')$ respectively. We choose generators $\sigma$ and $\sigma'$ of $\Gamma$ and $\Gamma'$ respectively, which makes commutative the following diagram 
$$
\xymatrix{
	\mathfrak{o}_E/\mathfrak{p}^{em}_E \ar[dd]_{\Pi}  \ar[rr]^{\sigma}  &&
	\mathfrak{o}_E/\mathfrak{p}^{em}_E \ar[dd]^{\Pi} \\\\
	\mathfrak{o}_{E'}/\mathfrak{p}^{em}_{E'}  \ar[rr]_{\sigma'} &&
	\mathfrak{o}_{E'}/\mathfrak{p}^{em}_{E'}
}
$$
We now describe the generators $\sigma$ and $\sigma'$ in totally ramified case. For the unramified case, one similarly chooses $\sigma$ and $\sigma'$. So, assume that $E/F$ is totally ramified. Then, Lemma \ref{lemma_close} says that $E'/F'$ is also totally ramified and $E$ is $lm$-close to $E'$. Since $l\ne p$, the extension $E/F$ is tamely ramified, and we have
\begin{center}
	$E = \dfrac{F[T]}{(T^l-\varpi)}$ and 
	$E' = \dfrac{F'[T]}{(T^l-\varpi')}$.
\end{center}
From (\ref{Galois_Totally_ram}) of Lemma \ref{lemma_close}, we get the following isomorphism
$$ \Pi : \frac{A_m[T]}{(T^l-
\overline{\varpi})} \longrightarrow \frac{A_m'[T]}
{(T^l-\overline{\varpi'})}, $$
induced by the ring isomorphism $\Lambda:A_m\rightarrow A_m'$ 
and it is given by
$$ \Pi\Big(\sum_{i=0}^{l-1}\overline{x_i}T^i + 
(T^l-\overline{\varpi})\Big) = \sum_{i=0}^{l-1}
\Lambda(\overline{x_i})T^i + 
(T^l-\overline{\varpi'}), $$
where $\overline{x_i} \in A_m$, for each $i$. Let $\sigma$ and $\sigma'$ be the automorphisms of $E$ and $E'$ respectively, that sends the class of $T$ to the class of $T^2$, i.e.,
$$ \sigma(T + (T^l-\varpi)) 
= T^2 + (T^l-\varpi) $$
and 
$$ \sigma'(T + (T^l-\varpi')) = T^2 + 
(T^l-\varpi'), $$
Then $\sigma$ (resp. $\sigma'$) is a generator of $\Gamma$ (resp. $\Gamma'$). Moreover, the automorphism $\sigma$ induces a ring isomorphism $\mathfrak{o}_E/\mathfrak{p}_E^{em} \xrightarrow{\sim} \mathfrak{o}_E/\mathfrak{p}_E^{em}$, and the induced map is again denoted 
by $\sigma$. Similarly for $\sigma'$. Then we have
$$ \Pi \circ \sigma = \sigma' \circ \Pi. $$
\end{remark}
	
\subsection{Compatibility with Galois action}\label{Comp_Galois}
Here, we continue to work with the setup of the preceding subsection. We choose the generator $\sigma$ (resp. $\sigma'$) of $\Gamma$ (resp. $\Gamma'$), as discussed in Remark \ref{Galois_isom}. We have the following commutative diagram 
$$
\xymatrix{
	\mathfrak{o}_E/\mathfrak{p}^{em}_E \ar[dd]_{\Pi}  \ar[rr]^{\sigma}  &&
	\mathfrak{o}_E/\mathfrak{p}^{em}_E \ar[dd]^{\Pi} \\\\
	\mathfrak{o}_{E'}/\mathfrak{p}^{em}_{E'}  \ar[rr]_{\sigma'} &&
	\mathfrak{o}_{E'}/\mathfrak{p}^{em}_{E'}
}
$$
where $e=1$ if $E/F$ is unramified and $e=l$ if $E/F$ is totally ramified. To be more precise, for any $a\in\mathfrak{o}_E$, we have the relation
\begin{equation}\label{rel_1}
	\overline{\sigma(a)}'=\sigma(\overline{a})'=
	\sigma'(\overline{a}')=\overline{\sigma'(a')},
\end{equation} 
where $a'\in\mathfrak{o}_{E'}$ is the preimage of $\overline{a}'$ under ${\rm mod}-\mathfrak{p}_{E'}^{em}$. We fix an isomorphism $\iota:\Gamma\xrightarrow{\sim} \Gamma'$ that sends the generator $\sigma$ to $\sigma'$.
	
\subsection{}
Note that $\sigma$ can be regarded as an automorphism of the group $G_E$ via its natural action on $G_E$. Consider the compact open subgroup,
$$
K_E={\rm Ker}\big(\textbf{G}(\mathfrak{o}_E)\rightarrow\textbf{G}
(\mathfrak{o}_E/\mathfrak{p}_E^{em})\big).
$$ 
Since $K_E$ is a pro-$p$ subgroup of $G_E$, it follows from the arguments of \cite[Lemma 6.6]{feng2024smith} that $K_E$ is $\sigma$-plain. There 
is an induced action of $\langle\sigma\rangle$ on the Hecke algebra $\mathcal{H}(G_E,K_E)$, given by
$$
(\sigma.f)(x)=f(\sigma^{-1}x),
$$ 
for all $x\in G_E$ and $f \in \mathcal{H}(G_E,K_E)$. Similarly, 
there is an action of $\langle\sigma'\rangle$ on the Hecke algebra $\mathcal{H}(G_{E'},K_{E'})$.
We end this section with the following proposition, which shows the compatibility of Kazhdan isomorphism with these Galois actions.
\begin{proposition}\label{comp_galois_action}
For all $f\in\mathcal{H}(G_E,K_E)$, we have
$$
{\rm Kaz}_{em}^E(\sigma.f)=\sigma'.{\rm Kaz}_{em}^E(f).
$$
\end{proposition}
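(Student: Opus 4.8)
The plan is to reduce the statement to the explicit description of the Kazhdan isomorphism on characteristic functions of double cosets, and then to chase the bijection $\phi\colon X_{em}(E)\to X_{em}(E')$ through the Galois actions. Recall that $\mathcal{H}(G_E,K_E)$ is spanned over $k$ by the functions $t_g$, $g\in G_E$, where $t_g$ is the characteristic function of $K_EgK_E$, and that ${\rm Kaz}_{em}^E(t_g)=t_{\phi(K_EgK_E)}$. Since both $\sigma.(-)$ and $\sigma'.(-)$ are $k$-linear, and since ${\rm Kaz}_{em}^E$ is $k$-linear, it suffices to check the identity on each basis element $t_g$. So the first step is to write
$$
{\rm Kaz}_{em}^E(\sigma.t_g),\qquad \sigma'.{\rm Kaz}_{em}^E(t_g),
$$
and observe that $\sigma.t_g=t_{\sigma(g)}$ (the characteristic function of $K_E\sigma(g)K_E$, using $\sigma(K_E)=K_E$), so the left-hand side is $t_{\phi(K_E\sigma(g)K_E)}$, while the right-hand side is $\sigma'.t_{\phi(K_EgK_E)}=t_{\sigma'(h)}$ where $K_{E'}hK_{E'}=\phi(K_EgK_E)$. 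Thus the whole proposition is equivalent to the single commutativity statement
$$
\phi\big(K_E\,\sigma(g)\,K_E\big)=\sigma'\!\cdot\phi\big(K_E\,g\,K_E\big)\quad\text{in }X_{em}(E'),
$$
i.e.\ $\phi$ is $\langle\sigma\rangle$-equivariant when $X_{em}(E')$ is given the $\sigma'$-action via the fixed isomorphism $\iota\colon\Gamma\xrightarrow{\sim}\Gamma'$.

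The second step is to unwind the definition of $\phi$. By the refined Cartan decomposition (\ref{MCD}) applied to $E$ and $E'$, every $K_E$-double coset has a representative of the form $b_i\,\varpi_\mu\,b_j^{-1}$ with $\mu\in X_*(\mathbf{T})^-$ and $(b_i,b_j)\in T_\mu(E)$ a chosen set of representatives for $H_{em}/\Gamma_\mu(E)$, where $H_{em}=\mathbf{G}(\mathfrak{o}_E/\mathfrak{p}_E^{em})^2$; the map $\Lambda$ (here $\Pi$) induces the bijection $T_\mu(E)\to T_\mu(E')$ and $\phi$ sends the class of $b_i\varpi_\mu b_j^{-1}$ to the class of $b_i'\varpi_\mu b_j'^{-1}$, where $b_i'$ is obtained by applying (a lift of) $\Pi$ entrywise to the matrix $b_i$. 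The key input is now the relation (\ref{rel_1}): because $\Pi\circ\sigma=\sigma'\circ\Pi$ on $\mathfrak{o}_E/\mathfrak{p}_E^{em}$, applying $\sigma$ entrywise to elements of $\mathbf{G}(\mathfrak{o}_E/\mathfrak{p}_E^{em})$ and then transporting by $\Pi$ agrees with transporting by $\Pi$ and then applying $\sigma'$ entrywise. Since $\sigma$ fixes $\varpi$ (it lies in $F$), one has $\sigma(\varpi_\mu)=\varpi_\mu$, and likewise for $\sigma'$ on $\varpi_\mu\in G_{F'}\subseteq G_{E'}$; hence $\sigma$ applied to the representative $b_i\varpi_\mu b_j^{-1}$ gives $\sigma(b_i)\varpi_\mu\sigma(b_j)^{-1}$, and under $\phi$ this goes to the class of $\sigma'(b_i')\varpi_\mu\sigma'(b_j')^{-1}=\sigma'(b_i'\varpi_\mu b_j'^{-1})$, which is exactly $\sigma'\cdot\phi(K_Eb_i\varpi_\mu b_j^{-1}K_E)$. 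A small bookkeeping point is that $\sigma$ may move the chosen representative $(b_i,b_j)$ out of the fixed transversal $T_\mu(E)$, but it permutes the $H_{em}$-orbit transitively, so one is reducing modulo $\Gamma_\mu(E)$ on both sides; one checks that $\Pi$ carries $\Gamma_\mu(E)$ onto $\Gamma_\mu(E')$ and intertwines the $\sigma$- and $\sigma'$-actions on $H_{em}$, which again is just (\ref{rel_1}) applied to the matrix entries of the stabiliser. This makes the orbit-level identity well defined and independent of the choice of transversal.

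The main obstacle is therefore not conceptual but the careful verification that the entrywise-$\Pi$ construction of $\phi$ genuinely commutes with the two Galois actions at the level of double cosets, including the compatibility $\Pi(\Gamma_\mu(E))=\Gamma_\mu(E')$ and the fact that $\sigma$ and $\sigma'$ fix $\varpi_\mu$; all of this is governed by the single commutation relation $\Pi\circ\sigma=\sigma'\circ\Pi$ from Remark \ref{Galois_isom} and (\ref{rel_1}). Once the double-coset identity $\phi\circ\sigma=\sigma'\circ\phi$ is established, the proposition follows immediately by $k$-linear extension, since ${\rm Kaz}_{em}^E$ is by construction the $k$-linear map sending $t_x\mapsto t_{\phi(x)}$ and both $\sigma.(-)$, $\sigma'.(-)$ act on basis elements by $t_x\mapsto t_{\sigma(x)}$, $t_y\mapsto t_{\sigma'(y)}$ respectively. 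I would close by remarking that the same argument shows ${\rm Kaz}_{em}^E$ is $\Gamma$-equivariant with respect to the full group actions via $\iota$, not merely for the chosen generator, so that it restricts to an isomorphism $\mathcal{H}(G_E,K_E)^\sigma\xrightarrow{\sim}\mathcal{H}(G_{E'},K_{E'})^{\sigma'}$, which is what is needed to make diagram (\ref{c_1}) meaningful.
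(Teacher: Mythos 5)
Your reduction to the double-coset equivariance $\phi\circ\sigma=\sigma'\circ\phi$ and the $k$-linear bookkeeping are the same opening moves as the paper, and your treatment of the $\Gamma_\mu(E)\to\Gamma_\mu(E')$ compatibility via $\Pi\circ\sigma=\sigma'\circ\Pi$ is on target. But there is a genuine gap in your claim that ``$\sigma$ fixes $\varpi$, so $\sigma(\varpi_\mu)=\varpi_\mu$.'' The Cartan decomposition of $G_E$ requires a uniformizer $\pi$ of $E$, not of $F$, and the two coincide only in the unramified case. When $E/F$ is totally ramified one has $\varpi=\pi^l$ up to a unit, so the elements $\mu(\varpi)$ with $\mu\in X_*(\mathbf{T})^-$ only hit the double cosets indexed by $l\cdot X_*(\mathbf{T})^-$; a bona fide set of representatives must use $\pi_\mu=\mu(\pi)$. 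And $\sigma$ does \emph{not} fix $\pi$: rather $\sigma(\pi)=\pi\cdot u$ for a unit $u\in\mathfrak{o}_E^\times$. Hence $\sigma(\pi_\mu)=\pi_\mu\cdot\mu(u)\neq\pi_\mu$, and $\sigma$ applied to the representative $a_i\pi_\mu a_j^{-1}$ does not yield another element of the same clean shape $b_i\pi_\mu b_j^{-1}$ until you absorb the extra factor $\mu(u)$ into the $T_\mu$-component.

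This is exactly the part your sketch skips and the part the paper spends most of its proof on. The paper writes $\sigma(\pi)=\pi\sigma(s)$ for a unit $s$, replaces $a_j$ by $d_j=a_js_\mu^{-1}$ so that the $\sigma$-twisted representative again has the form $\sigma(a_i)\pi_\mu\sigma(d_j^{-1})$, transports the resulting element of $\Gamma_\mu(E)$ to $\Gamma_\mu(E')$, and then verifies a separate approximate identity $(\sigma'(\pi'))_\mu^{-1}(\pi'\sigma'(s'))_\mu\in K_{E'}$ to undo the substitution on the $E'$-side. Without tracking this unit you cannot conclude $\phi(K_E\sigma(g)K_E)=\sigma'\phi(K_EgK_E)$ in the ramified case; the statement is not an immediate consequence of $\Pi\circ\sigma=\sigma'\circ\Pi$ alone. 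Your argument as written is complete only when $E/F$ is unramified; for the totally ramified case you need to introduce and carry the unit $s$ (and its transfer $s'$) through the computation as the paper does.
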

\begin{proof}
Because of the Cartan decomposition (\ref{MCD}), it is enough to show that
$$ {\rm Kaz}_{em}^E(\sigma.t_{a_i\pi_\mu a_j^{-1}})=\sigma'.{\rm Kaz}_{em}^E(t_{a_i\pi_\mu a_j^{-1}}),
$$ 
for all $\mu\in X_*(\textbf{T})^{-}$ and $(a_i,a_j)\in T_\mu(E)$.  
Note that
$$ \sigma.t_{a_i\pi_\mu a_j^{-1}}=
t_{\sigma(a_i)\sigma(\pi_\mu)\sigma(a_j^{-1})}. $$
By Cartan decomposition (\ref{MCD}), the
double coset space $K_E\sigma(a_i)\sigma(\pi_\mu)\sigma(a_j^{-1})K_E$ 
is of the form $K_Ec_i\pi_\mu c_j^{-1}K_E$, for some $(c_i,c_j)\in T_\mu(E)$. Then
\begin{equation}\label{1}
{\rm Kaz}_{em}^E(\sigma. t_{a_i\pi_\mu a_j^{-1}})={\rm Kaz}_{em}^E\big(t_{\sigma(a_i)\sigma(\pi_\mu)\sigma(a_j)
^{-1}}\big)={\rm Kaz}_{em}^E\big(t_{c_i\pi_\mu c_j^{-1}}\big)
=t_{c_i'\pi_\mu' c_j'^{-1}}.
\end{equation} 
Consider the Galois conjugate $\sigma(\pi)=\pi t=\pi\sigma(s)$, where $t,s\in\mathfrak{o}_E^\times$. Under the relation $\sigma(\mu(x))=\mu(\sigma(x))$, for $x\in E^\times$, we observe that
\begin{align*} 
K_Ec_i\pi_\mu c_j^{-1}K_E
&=K_E\sigma(a_i)\sigma(\pi_\mu)\sigma(a_j^{-1})K_E\\
%&=K_E\sigma(a_i)\pi_\mu t_\mu\sigma(a_j^{-1})K_E\\
%&= K_E\sigma(a_i)\pi_\mu \sigma(s_\mu)\sigma(a_j^{-1})K_E\\
&= K_E\sigma(a_i)\pi_\mu \sigma(s_\mu a_j^{-1})K_E\\
&= K_E\sigma(a_i)\pi_\mu \sigma(d_j^{-1})K_E,
\end{align*} 
where $s_\mu=\mu(s)$, and $d_j= a_js_\mu^{-1}\in\textbf{G}(\mathfrak{o}_E)$. Then $\big(\bar{c_i}^{-1}\overline{\sigma(a_i)},
\overline{\sigma(d_j^{-1})}\bar{c_j}\big)\in \Gamma_\mu(E)$, and using the
bijection $\Gamma_\mu(E)\rightarrow\Gamma_\mu(E')$, we get
$$ \big(\bar{c_i}'^{-1}\overline{\sigma(a_i)}',
\overline{\sigma(d_j^{-1})}'\bar{c_j}'\big)\in \Gamma_\mu(E').$$
Recall that the image of $d_j$ under the reduction map $\textbf{G}(\mathfrak{o}_E)\rightarrow
\textbf{G}(\mathfrak{o}_E/\mathfrak{p}_E^{em})$ is denoted by $\overline{d_j}$, and $\overline{d_j}'$ is the image of $\overline{d_j}$ under the isomorphism $\textbf{G}(\mathfrak{o}_E/\mathfrak{p}_E^{em})\xrightarrow{\sim}
\textbf{G}(\mathfrak{o}_{E'}/\mathfrak{p}_{E'}^{em})$. Let $d_j'\in\textbf{G}(\mathfrak{o}_{E'})$ be a preimage of
$\overline{d_j}'$ under ${\rm mod}-\mathfrak{p}_{E'}^{em}$. Then, the commutative diagram
$$
\xymatrix{
	\mathbb{G}_m(\mathfrak{o}_E/\mathfrak{p}^{em}_E) \ar[dd]_{\Pi}  \ar[rr]^{\mu}  &&
	\textbf{T}(\mathfrak{o}_E/\mathfrak{p}^{em}_E) \ar[dd]^{\Pi} \\\\
	\mathbb{G}_m(\mathfrak{o}_{E'}/\mathfrak{p}^{em}_{E'})  \ar[rr]_{\mu} &&
	\textbf{T}(\mathfrak{o}_{E'}/\mathfrak{p}^{em}_{E'})
}
$$
gives the relation 
%$$\textbf{G}(\mathfrak{o}_E)\longrightarrow
%\textbf{G}(\mathfrak{o}_E/\mathfrak{p}_E^{em})\longrightarrow
%\textbf{G}(\mathfrak{o}_{E'}/\mathfrak{p}_{E'}^{em}) $$
%$$ a_js_\mu^{-1}=d_j \longmapsto \overline{d_j} = \bar{a_j}\overline{s_\mu}^{-1}
%\longmapsto\overline{d_j}'=\overline{d_j'}, $$
%where $d_j'\in\textbf{G}(\mathfrak{o}_{E'})$ is a preimage of
%$\overline{d_j}'$ under ${\rm mod}-\mathfrak{p}_{E'}^{em}$ and
$$
\overline{d_j'}=\overline{a_j'}\overline{s_\mu}'^{-1}
=\overline{a_j'}(\overline{s'_\mu})^{-1},
$$
where $s' \in \mathfrak{o}_{E'}$ is a preimage of $\overline{s}'$ under 
${\rm mod}-\mathfrak{p}_{E'}^{em}$. On the other hand, the relation $\sigma(\pi)=\pi\sigma(s)$ together with (\ref{rel_1}) implies that
%$$\overline{\sigma'(\pi')}=\sigma'(\overline{\pi}')=\overline{\sigma(\pi)}'
%=\sigma(\overline{\pi})'=\overline{\sigma(\pi)}'=\overline{\pi
%\sigma(s)}'=\overline{\pi}'\overline{\sigma(s)}'=\overline{\pi'}\overline
%{\sigma'(s')}.$$
$$
\overline{\sigma'(\pi')}=\sigma'(\overline{\pi}')
=\overline{\pi'}\overline{\sigma'(s')}.
$$
For any $\mu\in X_*(\textbf{T})^{-}$, the above equality gives
$$
\overline{(\sigma'(\pi'))_\mu}=\overline{(\pi'\sigma'(s'))_\mu}.
$$
This implies that $(\sigma'(\pi'))_\mu^{-1}(\pi'\sigma'(s'))_\mu \in K_{E'}$.
Now, using normality of $K_{E'}$ in $\textbf{G}(\mathfrak{o}_{E'})$ and the relation $\sigma(\mu(x))=\mu(\sigma(x))$, for $x\in E^\times$, we get
\begin{align*}
K_{E'}c_i'\pi_\mu'c_j'^{-1}K_{E'}
&=K_{E'}\sigma'(a_i')\pi_\mu' \sigma'(d_j'^{-1})K_{E'}\\
&=K_{E'}\sigma'(a_i') \sigma'(\pi_\mu') \sigma'(a_j'^{-1})K_{E'},
\end{align*}
which shows that
$$
t_{c_i'\pi_\mu' c_j'^{-1}}=t_{\sigma'(a_i')\sigma'(\pi_\mu')
\sigma'(a_j'^{-1})}=\sigma'.t_{a_i'\pi_\mu' a_j'^{-1}}.
$$
Finally, it follows from (\ref{1}) and the above relation that
$$
{\rm Kaz}_{em}^E(\sigma.t_{a_i\pi_\mu a_j^{-1}})=\sigma'.{\rm Kaz}_{em}^E(t_{a_i\pi_\mu a_j^{-1}}).
$$ 
\end{proof}
	
\section{Compatibility of Brauer homomorphism and Kazhdan
		isomorphism}\label{Br_1}
In this section, we prove Theorem \ref{intro_thm}. Let us first recall the notations and terminologies from the preceding sections. 
\subsection{}
Fix a positive integer $m$. Let $F$ and $F'$ be the non-Archimedean local fields such that $F$ is $m$-close to $F'$. Let $E$ be a finite Galois extension of $F$ of prime degree $l$ with $l\ne p$, where $p$ is the residue characteristic of both $F$ and $F'$. According to Lemma \ref{lemma_close}, there exists a finite Galois extension $E'$ of $F'$ of degree $l$ and $E'$ is $em$-close to $E$, where $e=1$ if $E/F$ is unramified and $e=l$ if $E/F$ is totally ramified. 
	
Let $\textbf{G}$ be a split connected reductive group defined over $\mathbb{Z}$. It follows from \cite[Proposition 12.9.4]{MR4520154} that the fixed point subgroup $K_E^\sigma$ (resp. $K_{E'}^{\sigma'}$) is the $m$-th congruence subgroup of $\textbf{G}(\mathfrak{o}_F)$ (resp. $\textbf{G}(\mathfrak{o}_{F'})$) i.e, $K_E^\sigma=K_F$ and $K_{E'}^{\sigma'}=K_{F'}$, where
$$
K_F={\rm Ker}\big(\textbf{G}(\mathfrak{o}_F)\rightarrow\textbf{G}
(\mathfrak{o}_F/\mathfrak{p}_F^m)\big)	
$$ and
$$
K_{F'}={\rm Ker}\big(\textbf{G}(\mathfrak{o}_{F'})\rightarrow\textbf{G}
(\mathfrak{o}_{F'}/\mathfrak{p}_{F'}^m)\big).
$$ 
Proposition \ref{comp_galois_action} then gives an isomorphism between $\mathcal{H}(G_E,K_E)^\sigma$ and $\mathcal{H}(G_{E'},K_{E'})^{\sigma'}$, induced by ${\rm Kaz}_{em}^E$. On the other hand, we have the Brauer homomorphism
$\overline{\rm Br}$ (resp. $\overline{\rm Br}'$) from the space $\mathcal{H}(G_E,K_E)^\sigma$ (resp. $\mathcal{H}(G_{E'},K_{E'})^{\sigma'}$) to the Hecke algebra $\mathcal{H}(G_F,K_F)$ (resp. $\mathcal{H}(G_{F'},K_{F'})$). With these set up, we aim to prove 
\begin{theorem}\label{main_thm}
Let $F, F', E$ and $E'$ be as above. Then, for any connected split reductive group $\textbf{G}$ defined over $\mathbb{Z}$, we have
$$ 
{\rm Kaz}_m^F\circ \overline{\rm Br} = \overline{\rm Br}'\circ {\rm Kaz}_{em}^E,
$$
where $e=1$ if $E/F$ is unramified and $e=l$ if $E/F$ is totally ramified.
\end{theorem}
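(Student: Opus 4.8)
The plan is to verify the identity on a $k$-basis of $\mathcal{H}(G_E,K_E)^\sigma$. Since $\sigma$ permutes the basis $\{t_D\}$ of $\mathcal{H}(G_E,K_E)$ indexed by $K_E$-double cosets $D$, and $\sigma$ has prime order $l$, a basis of $\mathcal{H}(G_E,K_E)^\sigma$ consists of the $t_D$ with $\sigma(D)=D$ together with the orbit sums $\sum_{i=0}^{l-1}t_{\sigma^i(D)}$ over the $\sigma$-orbits of size $l$. Both $\overline{\rm Br}'\circ{\rm Kaz}_{em}^E$ and ${\rm Kaz}_m^F\circ\overline{\rm Br}$ kill the orbit sums: for the latter because $\overline{\rm Br}$ is restriction to $G_F$ and a double coset meets $G_F=G_E^\sigma$ only when it is $\sigma$-stable (if $g\in D\cap G_F$ then $g=\sigma(g)\in D\cap\sigma(D)$, so $D=\sigma(D)$), and for the former because, by Proposition~\ref{comp_galois_action}, ${\rm Kaz}_{em}^E$ carries a size-$l$ $\sigma$-orbit of double cosets to a size-$l$ $\sigma'$-orbit, which is then annihilated by $\overline{\rm Br}'$. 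So it remains to prove the identity on $t_D$ for a single $\sigma$-stable $K_E$-double coset $D$.

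Next I would show that for $\sigma$-stable $D$ the function $t_D|_{G_F}$ is again the characteristic function of a single double coset. The set $D$ is transitive under $K_E\times K_E$ acting by $(k_1,k_2)\cdot g=k_1gk_2^{-1}$, compatibly with $\sigma$; since $K_E$ is a pro-$p$ group and $l\ne p$, the nonabelian cohomology $H^1(\langle\sigma\rangle,-)$ vanishes on $K_E\times K_E$ and on all of its closed subgroups, so $D^\sigma=D\cap G_F$ is nonempty and forms a single orbit of $(K_E\times K_E)^\sigma=K_F\times K_F$ (using $K_F=K_E^\sigma$ from \cite[Proposition~12.9.4]{MR4520154}, and that $K_E$ is $\sigma$-plain so that $\overline{\rm Br}$ is defined). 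Hence $\overline{\rm Br}(t_D)=t_{K_Fz_DK_F}$ where $K_Fz_DK_F:=D\cap G_F$, and similarly $\overline{\rm Br}'(t_{\psi(D)})=t_{K_{F'}z'K_{F'}}$, where $\psi$ (resp. $\phi$) denotes the Kazhdan bijection on $K_E$- (resp. $K_F$-) double cosets and $\psi(D)$ is $\sigma'$-stable by Proposition~\ref{comp_galois_action}. The theorem therefore reduces to the single assertion $\phi(K_Fz_DK_F)=\psi(D)\cap G_{F'}$; equivalently, writing $\iota\colon K_FzK_F\mapsto K_EzK_E$ and $\iota'\colon K_{F'}z'K_{F'}\mapsto K_{E'}z'K_{E'}$ for the \emph{inflation of double cosets} maps, it reduces to $\psi\circ\iota=\iota'\circ\phi$, evaluated at $K_Fz_DK_F$ (note $\iota(K_Fz_DK_F)=K_Ez_DK_E=D$).

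This last commutativity is the heart of the matter, and I would prove it by transport of structure. On a Cartan cell $G_F(\nu)$ the set of $K_F$-double cosets is a homogeneous space $H_m(F)/\Gamma_\nu(F)$, where $H_m(F)=\textbf{G}(\mathfrak{o}_F/\mathfrak{p}_F^m)^2$ acts by $(\bar x,\bar y)\cdot K_FgK_F=K_Fxgy^{-1}K_F$ with base point $K_F\varpi_\nu K_F$; by \cite[Section~1]{MR874049} the bijection $\phi$ is, cell by cell, the map induced by the group isomorphism $\Lambda_*\colon H_m(F)\xrightarrow{\sim}H_m(F')$ (which carries $\Gamma_\nu(F)$ onto $\Gamma_\nu(F')$ once $F,F'$ are sufficiently close relative to $\nu$) together with the normalization $\Lambda(\varpi)=\varpi'$, and the identical description holds for $\psi$ with $(E,em,\Pi,\pi)$ replacing $(F,m,\Lambda,\varpi)$. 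The compatibility of uniformizers $\pi^{e}=\varpi$, $\pi'^{e}=\varpi'$ gives $\varpi_\nu=\pi_{e\nu}$, hence $G_F(\nu)\subseteq G_E(e\nu)$, and shows that $\iota$ maps $\{K_F$-cosets in $G_F(\nu)\}$ into $\{K_E$-cosets in $G_E(e\nu)\}$ via the group homomorphism $H_m(F)\hookrightarrow H_{em}(E)$ coming from the ring inclusion $\mathfrak{o}_F/\mathfrak{p}_F^m\hookrightarrow\mathfrak{o}_E/\mathfrak{p}_E^{em}$, a homomorphism that carries $\Gamma_\nu(F)$ into $\Gamma_{e\nu}(E)$ and preserves base points. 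Applying the functor $\textbf{G}(-)$ to the commutative square of finite rings with horizontal maps $\Lambda,\Pi$ and vertical maps the natural inclusions (the square of the Remark following Lemma~\ref{lemma_close}) produces a commutative square of the groups $H_m(F),H_m(F'),H_{em}(E),H_{em}(E')$ compatible with the subgroups $\Gamma_\nu$ and with base points; passing to the quotient homogeneous spaces yields $\psi\circ\iota=\iota'\circ\phi$ on $G_F(\nu)$, and evaluating at $K_Fz_DK_F$ gives $\psi(D)=\iota'(\phi(K_Fz_DK_F))=K_{E'}z'K_{E'}$, whence $\phi(K_Fz_DK_F)=\psi(D)\cap G_{F'}$, as required.

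The main obstacle, as is typical of the close-fields method, is controlling the levels: the identity $\Lambda_*\Gamma_\nu(F)=\Gamma_\nu(F')$ and its analogue over $E$ constitute Kazhdan's combinatorial lemma and hold only when the fields are close relative to the finitely many cells in play, so one must check that the $em$-closeness of $E,E'$ furnished by Lemma~\ref{lemma_close} suffices for ${\rm Kaz}_{em}^E$ to be the transport of structure described above, in tandem with the $m$-closeness needed for ${\rm Kaz}_m^F$ — and this is precisely where the compatible choices of uniformizers and of the generators $\sigma,\sigma'$ from Remark~\ref{Galois_isom} enter. All the remaining ingredients (vanishing of $H^1(\langle\sigma\rangle,-)$ on pro-$p$ groups, $K_E^\sigma=K_F$, $\sigma$-plainness of $K_E$, and Proposition~\ref{comp_galois_action}) are already available.
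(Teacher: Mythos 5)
Your proposal is correct, and it is organized somewhat differently from the paper's proof. You reduce at the outset to $\sigma$-stable $K_E$-double cosets $D$ (checking that both sides annihilate the length-$l$ orbit sums, using Proposition~\ref{comp_galois_action} for the right-hand side) and then invoke a cohomological argument — vanishing of $H^1(\langle\sigma\rangle,-)$ on the pro-$p$ group $K_E\times K_E$ and its closed subgroups, together with a Sylow/Hall-type fixed-point argument — to show that $D\cap G_F$ is nonempty and a single $K_F\times K_F$-orbit, i.e., a single $K_F$-double coset. This makes $\overline{\rm Br}(t_D)$ a single basis element and packages the remaining work as a commutative square $\psi\circ\iota=\iota'\circ\phi$ of double-coset maps, proved by transport of structure along the commuting square of finite quotient rings $(\Lambda,\Pi)$ from the remark following Lemma~\ref{lemma_close}, using $\varpi_\nu=\pi_{e\nu}$ to pass between Cartan cells. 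The paper does not isolate the single-orbit statement; instead it writes $\overline{\rm Br}(t_{a_i\pi_\mu a_j^{-1}})$ as a (possibly multi-term) sum over the Cartan decomposition of $G_F$ via equations (\ref{decom_1})--(\ref{decom_2}), and proves the double-coset equivalences (\ref{eq_3}) and (\ref{equ_10}) termwise by running the same $\Gamma_\mu(E)\to\Gamma_\mu(E')$ compatibility through the ring square. So the core computational ingredient is identical; your version buys a cleaner structural picture (each Brauer image is literally a single characteristic function, and the theorem becomes a commutative square of set maps) at the modest cost of the $H^1$-vanishing lemma, which the paper bypasses by never asserting the intersection is a single double coset. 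One remark on your final paragraph: the worry about levels is not really an obstacle here — the identity $\Lambda_*\Gamma_\nu(F)=\Gamma_\nu(F')$ (and its $E$-analogue) holds already at the nominal closeness $m$ (resp.\ $em$), since the double-coset combinatorics depend only on $\mathfrak{o}_F/\mathfrak{p}_F^m$; the stricter $n_C$-closeness of Remark~\ref{rmk_alg_hom} is needed only for the convolution-algebra structure, which the present argument does not use.
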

\begin{proof}
We divide the proof into two parts. In the first part, we consider the extension $E/F$ unramified. The second part deals with the totally ramified case. To begin with, let us recall that an arbitrary $h \in\mathcal{H}(G_E,K_E)^\sigma$ can be written as
$$
h=\sum_{\mu\in X_*(\textbf{T})^{-}}\sum_{(a_i,a_j)\in T_\mu(E)}
\alpha_\mu(a_i,a_j) t_{a_i\pi_\mu a_j^{-1}},
$$
where each $\alpha_\mu(a_i,a_j) \in k$. Then  
\begin{equation}\label{eq_1}
({\rm Kaz}_m^F\circ {\rm \overline{Br}})(h)=\sum_{(a_i,a_j) 
\in T_\mu(E)} \alpha_\mu(a_i,a_j) ({\rm Kaz}_m^F\circ 
{\rm \overline{Br}})(t_{a_i\pi_\mu a_j^{-1}})
\end{equation}
and 
\begin{equation}\label{eq_2}
({\rm \overline{Br}}'\circ {\rm Kaz}_{em}^E)(h)=\sum_{\mu \in
X_*(\textbf{T})^{-}} \sum_{(a_i,a_j) \in T_\mu(E')} \alpha_\mu(a_i,a_j) {\rm \overline{Br}}'(t_{a_i'\pi_\mu' a_j'^{-1}}).
\end{equation} 
For each $\mu\in X_*(\textbf{T})^{-}$ and $(a_i,a_j)\in T_\mu(E)$, we have ${\rm \overline{Br}}(t_{a_i\pi_\mu a_j^{-1}})=1_{G_F\cap K_Ea_i\pi_\mu a_j^{-1}K_E}$. Then Cartan decomposition (\ref{MCD}) gives
\begin{equation}\label{decom_1}
K_Ea_i\pi_\mu a_j^{-1}K_E \cap G_F=\coprod_{\lambda \in X_*(\textbf{T})^{-}} \coprod_{(b_i,b_j) \in T_\lambda(F)} K_Ea_i\pi_\mu a_j^{-1}K_E \cap K_Fb_i\varpi_\lambda b_j^{-1}K_F.
\end{equation} Similarly, 
\begin{equation}\label{decom_2}
K_{E'}a_i'\pi_\mu'a_j'^{-1}K_{E'}\cap G_{F'}=\coprod_{\nu\in
X_*(\textbf{T})^{-}}\coprod_{(d_i',d_j')\in T_\nu(F')}
K_{E'}a_i'\pi'_\mu a_j'^{-1}K_{E'}\cap K_{F'}d_i'\varpi'_\nu
d_j'^{-1}K_{F'}.
\end{equation}
\subsubsection{Unramified case}
Suppose $E$ is unramified over $F$. Then the field $E'$ is also unramified over $F'$ and it is $m$-close to $E$ (Lemma \ref{lemma_close}). In this case, $\pi=\varpi$ and $\pi'=\varpi'$. For any $\lambda\in X_*(\textbf{T})^{-}$ and $(b_i,b_j)\in
T_\lambda(F)$, it follows from the
decompositions (\ref{decom_1}) and (\ref{decom_2}) that
$$ b_i\varpi_\lambda b_j^{-1}\in K_Ea_i\varpi_\mu a_j^{-1}K_E $$ 
if and only if 
$$ \varpi_\lambda \in K_Eb_i^{-1}a_i\varpi_\mu a_j^{-1}b_jK_E, $$ 
and this will happen only if $\lambda=\mu$. Then
$$ \varpi_\mu\in K_Eb_i^{-1}a_i\varpi_\mu a_j^{-1}b_jK_E $$ 
if and only if 
$$ (b_i^{-1}a_i, b_j^{-1}a_j)\in\Gamma_\mu(E). $$  
Using the bijection $\Gamma_\mu(E)
\rightarrow\Gamma_\mu(E')$ $(\overline{a}\mapsto\overline{a}')$, we have
$$ (\overline{b_i}^{-1}\overline{a_i}, \overline{b_j}^{-1}
\overline{a_j}) \in \Gamma_\mu(E) $$ 
if and only if
$$ (\overline{b_i}'^{-1}\overline{a_i}', \overline{b_j}'^{-1}\overline{a_j}')=
(\overline{b_i'}^{-1}\overline{a_i'}, \overline{b_j'}^{-1}\overline{a_j'})\in
\Gamma_\mu(E'), $$ 
where $(b_i',b_j')\in T_\mu(F')$. Therefore, we have
\begin{equation}\label{eq_3}
b_i\varpi_\mu b_j^{-1}\in K_Ea_i\varpi_\mu a_j^{-1}K_E \iff	b_i'\varpi_\mu'b_j'^{-1}\in
K_{E'}a_i'\varpi_\mu' a_j'^{-1}K_{E'}
\end{equation}
In view of (\ref{eq_3}), it follows from (\ref{eq_1}) and
(\ref{eq_2}) that
\begin{equation}\label{C_B}
({\rm Kaz}_m^F \circ {\rm \overline{Br}})(h) = ({\rm \overline{Br}}' \circ {\rm Kaz}_m^E)(h).
\end{equation}  
\subsubsection{Totally Ramified case}
Here, we assume that the extension $E/F$ is totally ramified. Then Lemma \ref{lemma_close} ensures that the extension $E'/F'$ is also totally ramified and $E'$ is $lm$-close to $E$, where $l=[E:F]=[E':F']$. 
Note that
$$ K_E={\rm Ker}\big(\textbf{G}(\mathfrak{o}_E)
\xrightarrow{{\rm mod}-\mathfrak{p}_E^{lm}}
\textbf{G}(\mathfrak{o}_E/\mathfrak{p}_E^{lm})\big) $$
and
$$ K_F={\rm Ker}\big(\textbf{G}(\mathfrak{o}_F)
\xrightarrow{\text{mod}-\mathfrak{p}_F^m}
\textbf{G}(\mathfrak{o}_F/\mathfrak{p}_F^m)\big). $$
Let $\xi\in X_*(\textbf{T})^{-}$ and $(c_i,c_j)\in T_\xi(F)$. In view of the decompositions (\ref{decom_1}) and (\ref{decom_2}), we have
$$ c_i\varpi_\xi c_j^{-1}\in K_Ea_i\pi_\mu a_j^{-1}K_E $$ if and only
if
$$ \varpi_\xi\in K_Ec_i^{-1}a_i\pi_\mu a_j^{-1}c_jK_E, $$ and it is
possible only if $l\xi=\mu$. Therefore, it follows that
$$ \pi_{l\xi}\in K_Ec_i^{-1}a_i\pi_{l\xi} a_j^{-1}c_jK_E $$ 
if and only if
$$ (\overline{c_i}^{-1}\overline{a_i},
\overline{a_j}^{-1}\overline{c_j})\in
\Gamma_{l\xi}(E). $$ 
Under the bijection $\Gamma_{l\xi}(E)\rightarrow \Gamma_{l\xi}(E')$, 
we have
$$ (\overline{c_i}^{-1}\overline{a_i},
\overline{a_j}^{-1}\overline{c_j})\in
\Gamma_{l\xi}(E') $$ if and only if
$$ (\overline{c_i}'^{-1}\overline{a_i}', \overline{c_j}'^{-1}\overline{a_j}')=
(\overline{c_i'}^{-1}\overline{a_i'}, \overline{c_j'}^{-1}\overline{a_j'}) \in
\Gamma_{l\xi}(E'),$$
where $(c_i',c_j')\in T_{l\xi}(F')$. Thus we have
\begin{equation}\label{equ_10}
c_i\varpi_{\xi} c_j^{-1}\in K_Ea_i\pi_{\mu} a_j^{-1}K_E \iff
c_i'\varpi'_{\xi}c_j'^{-1}\in K_{E'}a_i'\pi'_{\mu}
a_j'^{-1}K_{E'}.
\end{equation} 
From the relations (\ref{eq_1}), (\ref{eq_2}) and
(\ref{equ_10}), we get
$$
({\rm Kaz}_{m}^F\circ{\rm \overline{Br}})(h)=({\rm \overline{Br}}'
\circ{\rm Kaz}_{lm}^E)(h).
$$ 
This completes the proof.
\end{proof}
\section{Linkage over close local fields}
This section gives an application of Theorem \ref{main_thm} in the context of representation theory. In the article \cite{MR3432583}, Treumann--Venkatesh defined the notion of linkage, which is conjectured to be compatible with the Langlands functorial transfer. Using Theorem \ref{main_thm}, we show that linkage is compatible under close local fields. To prove this, we first formulate the compatibility of linkage with Brauer homomorphism, as described in \cite[Section 6.2]{MR3432583}. To begin with, we recall the notion of linkage and Tate cohomology. For precise reference, see \cite[Section 3 and 6]{MR3432583}.
\subsection{Tate cohomology}
Let $\Gamma$ be a cyclic group of order $n$ with a generator $\gamma$, and let $M$ be an abelian group with an action of $\Gamma$. Let $T_\gamma$ be the
automorphism of $M$ defined by
\begin{center}
$T_\gamma(m)=\gamma.m$,  for $\gamma\in\Gamma, m\in M$.
\end{center}
Let $N={\rm id}+T_\gamma+T_{\gamma^2}+\dots+T_{\gamma^{n-1}}$ be the
norm operator. The Tate cohomology groups $\widehat{H}^i(M)$, for $i\in\{0,1\}$, are defined as
\begin{center}
$\widehat{H}^0(M):={\rm Ker}({\rm id}-T_\gamma)/{\rm Im}(N)$,\\
$\widehat{H}^1(M):={\rm Ker}(N)/{\rm Im}({\rm id}-T_\gamma)$.   
\end{center}
\subsubsection{Tate Cohomology of sheaves on $l$-spaces}
Let $X$ be an $l$-space (i.e., locally compact, 
totally disconnected and Hausdorff) with an action of a finite group
$\langle\gamma\rangle$ of prime order $l$. Let $\mathcal{F}$ be a
sheaf of $k$-modules on $X$. If $\mathcal{F}$ is $\gamma$-equivariant,
then $\gamma$ can be regarded as a map of restricted sheaves
$\mathcal{F}|_{X^\gamma} \rightarrow \mathcal{F}|_{X^\gamma}$ and the
Tate cohomology is defined as
\begin{center}
$\widehat{H}^0(\mathcal{F}|_{X^\gamma}) := 
{\rm Ker}(1-\gamma)/{\rm Im}(N)$,\\
$\widehat{H}^1(\mathcal{F}|_{X^\gamma}) :=
{\rm Ker}(N)/{\rm Im}(1-\gamma)$.
\end{center}
For each $i$, the object
$\widehat{H}^i(\mathcal{F}|_{X^\gamma})$ is a sheaf on $X^\gamma$. If $\Gamma_c(X;\mathcal{F})$ denotes the space of compactly supported sections on $\mathcal{F}$, then we have the following result. 
\begin{proposition}\cite[Proposition 3.3.1]{MR3432583}\label{Tate_isom}
For each $i\in\{0,1\}$, the restriction map induces an isomorphism 
\begin{center}
$\widehat{H}^i(\Gamma_c(X;\mathcal{F}))\xrightarrow{\sim}
\Gamma_c(X^\gamma; \widehat{H}^i(\mathcal{F}))$.
\end{center}
\end{proposition}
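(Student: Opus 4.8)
The plan is to reduce the proposition to a purely sheaf-theoretic statement about Tate cohomology commuting with the functor of compactly supported sections, and then to verify that statement by choosing a convenient $\langle\gamma\rangle$-equivariant compactly supported resolution (or, dually, by working directly with stalks). Concretely, I would first unwind the definitions: $\Gamma_c(X;\mathcal F)$ is a $k$-module with a $\mathbb Z/l$-action coming from the $\gamma$-equivariant structure on $\mathcal F$, and both $\widehat H^0$ and $\widehat H^1$ are computed from the two-term complex $\mathcal F|_{X^\gamma}\xrightarrow{1-\gamma}\mathcal F|_{X^\gamma}$ together with the norm operator $N$. Thus the claim is that restriction of sections to $X^\gamma$ induces isomorphisms on the homology of the $2$-periodic complex $\cdots\xrightarrow{N}\Gamma_c(X;\mathcal F)\xrightarrow{1-\gamma}\Gamma_c(X;\mathcal F)\xrightarrow{N}\cdots$.

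The key structural input is the decomposition of $X$ into the fixed locus $X^\gamma$ and its (open, $\gamma$-stable) complement $X\smallsetminus X^\gamma$, on which $\langle\gamma\rangle$ acts freely. This gives a short exact sequence of $k[\mathbb Z/l]$-modules
\begin{equation}\label{ses_plan}
0\longrightarrow \Gamma_c(X\smallsetminus X^\gamma;\mathcal F)\longrightarrow \Gamma_c(X;\mathcal F)\longrightarrow \Gamma_c(X^\gamma;\mathcal F|_{X^\gamma})\longrightarrow 0,
\end{equation}
using that on an $l$-space extension of a compactly supported section by zero is exact (sections of sheaves on $l$-spaces behave like the flabby/soft situation for $\Gamma_c$). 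I would then invoke the long exact sequence in Tate cohomology attached to \eqref{ses_plan} and show that $\widehat H^i\big(\Gamma_c(X\smallsetminus X^\gamma;\mathcal F)\big)=0$ for all $i$: this is the standard fact that Tate cohomology of a cyclic group vanishes on a module that is induced (free over $k[\mathbb Z/l]$), and $\Gamma_c$ of a sheaf on a free $\langle\gamma\rangle$-space decomposes, locally over the quotient, as such an induced module. Granting this vanishing, the connecting maps in the long exact sequence force the restriction map $\widehat H^i(\Gamma_c(X;\mathcal F))\to\widehat H^i(\Gamma_c(X^\gamma;\mathcal F|_{X^\gamma}))$ to be an isomorphism, and since on $X^\gamma$ the action is trivial so that $\mathcal F|_{X^\gamma}$ with its $\gamma$-map is exactly the data computing $\widehat H^i(\mathcal F)$ as a sheaf, we get $\widehat H^i(\Gamma_c(X^\gamma;\mathcal F|_{X^\gamma}))=\Gamma_c(X^\gamma;\widehat H^i(\mathcal F))$ — the last identification being that $\widehat H^i$ of sheaves, a cokernel/kernel of sheaf maps, commutes with $\Gamma_c$ because $\Gamma_c$ is exact on $l$-spaces.

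I expect the main obstacle to be the vanishing of Tate cohomology of $\Gamma_c(X\smallsetminus X^\gamma;\mathcal F)$, i.e.\ making precise the sense in which compactly supported sections over a free $\langle\gamma\rangle$-space form a cohomologically trivial (induced) $k[\mathbb Z/l]$-module. The clean way is to pass to the quotient $q\colon X\smallsetminus X^\gamma\to Y:=(X\smallsetminus X^\gamma)/\langle\gamma\rangle$, which is a finite \'etale (covering) map of $l$-spaces of degree $l$, and observe $\Gamma_c(X\smallsetminus X^\gamma;\mathcal F)\cong\Gamma_c(Y;q_!\mathcal F)\cong\Gamma_c(Y;q_*\mathcal F)$ with $q_*\mathcal F$ carrying a $k[\mathbb Z/l]$-action whose stalks are free $k[\mathbb Z/l]$-modules of rank $\dim_k\mathcal F_x$; hence $\Gamma_c(Y;q_*\mathcal F)$ is a filtered colimit of free $k[\mathbb Z/l]$-modules (over compact opens of $Y$, where $q$ trivializes), and such colimits still have vanishing Tate cohomology because $\widehat H^\bullet$ commutes with filtered colimits. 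Since this is precisely the content of \cite[Proposition 3.3.1]{MR3432583}, I would at this point simply cite that statement; the sketch above is the proof I would write out if a self-contained argument were wanted, and the only genuinely delicate point is the exactness of $\Gamma_c$ on $l$-spaces and its compatibility with the pushforward along the free quotient, which one handles by reducing to a disjoint union of compact opens on which everything is literally a finite free module.
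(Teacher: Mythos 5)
The paper states this proposition purely as a citation to \cite[Proposition 3.3.1]{MR3432583} and gives no proof of its own, so the only meaningful comparison is with the Treumann--Venkatesh argument itself. Your sketch is correct and is in essence that argument: the open--closed decomposition of $X$ along the (closed, $\gamma$-stable) fixed locus, exactness of $\Gamma_c$ on $\ell$-spaces to obtain the short exact sequence of $k[\mathbb{Z}/l]$-modules, vanishing of Tate cohomology on the free-action open complement (where $\Gamma_c$ of $\mathcal{F}$ is, via the degree-$l$ covering $q$ onto the quotient, a filtered colimit of free $k[\mathbb{Z}/l]$-modules, hence Tate-acyclic since Tate cohomology of a cyclic group commutes with filtered colimits), and finally commuting $\Gamma_c$ with the kernels and cokernels that define $\widehat{H}^i(\mathcal{F})$ on the closed stratum $X^\gamma$, all of which you identify correctly and none of which has a gap.
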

	
\subsection{Linkage}\label{linkage_def}
Let $F$ be a non-Archimedean local field, and $G$ be the $F$-points of a connected reductive group defined over $F$. A representation $(\rho,V)$ of $G$ is called {\it smooth} if for every vector $v\in V$, the
$G$-stabilizer of $v$ is an open subgroup of $G$. The
representation $\rho$ is called {\it admissible} if, for each compact
open subgroup $K$ of $G$, the space of $K$-fixed vectors $V^K$ (sometimes denoted by $\rho^K$) is
finite-dimensional over $k$. In this subsection, all the
representations are assumed to be admissible, and the representation
spaces are defined over $k$. We denote by $\mathcal{R}(G)$ the category of smooth, admissible, $k$-representations of $G$. 
\subsubsection{Frobenius Twist}
Let $(\rho,V)$ be a representation of $G$. Consider the vector space $V^{(l)}$, where the underlying additive group structure of $V^{(l)}$ is same as that of $V$ but the scalar action $\star$ on $V^{(l)}$ is given by
\begin{center}
$c\star v=c^{\frac{1}{l}}.v$ , for all $c\in k, v\in V$.
\end{center}
Then, the action of $G$ on $V$ induces a smooth representation $\rho^{(l)}$ of $G$ on the space $V^{(l)}$. The representation $(\rho^{(l)},V^{(l)})$ is called the {\it Frobenius twist} of $(\rho,V)$.
\subsubsection{}
Let $\sigma$ be an automorphism of $G$ of prime order $l$. An irreducible representation $(\Xi,W)$ of $G$ is called $\sigma$-fixed if $\Xi$ is isomorphic to the twisted representation $\Xi\circ\sigma$. In such a case, there is a unique action of $\sigma$ on $W$, compatible with the action of $\sigma$ on $G$ (see \cite[Proposition 6.1]{MR3432583}). Then the Tate cohomology groups $\widehat{H}^i(W)$, for $i\in\{0,1\}$, are defined as $k$-representations of $G^\sigma$. We denote these representations by $\widehat{H}^i(\Xi)$.
\begin{definition}
\rm An irreducible representation $\rho$ of $G^\sigma$ is linked
with the representation $\Xi$ if the Frobenius twist $\rho^{(l)}$
occurs as a Jordan-Holder constituent of $\widehat{H}^0(\Xi)$ or
$\widehat{H}^1(\Xi)$.
\end{definition}
In \cite[Section 6.3]{MR3432583}, the authors made a conjecture that the Tate cohomology groups $\widehat{H}^i(\Xi)$, $i\in\{0,1\}$, are of finite length as representations of $G^\sigma$. We give a proof of this conjecture for the general linear group ${\rm GL}_n$ in the local base change setting, using Bernstein--Zelevinksy derivatives.
	
\subsection{Finiteness of Tate cohomology}
Before going into the main theorem, we first recall the notion of derivatives for smooth representations of ${\rm GL}_n(L)$, where $L$ is a non-Archimedean local field. We refer to \cite[Chapter III]{Vigneras_mod_l} and \cite[Section 3]{BZ_induced} for more details. Let $P_n(L)$ and $U_n(L)$ be the subgroups of ${\rm GL}_n(L)$, given by
$$ P_n(L) = \Big\{\begin{pmatrix}
	A & B\\
	0 & 1
\end{pmatrix}: A\in {\rm GL}_{n-1}(L), B\in L^{n-1}\Big\} $$ 
and
$$ U_n(L)=\Big\{\begin{pmatrix}
	1 & C\\
	0 & 1
\end{pmatrix}: C\in L^{n-1}\Big\} $$ 
respectively. We use the short notation $X_{L,n}$ to denote the coset space $P_n(L)/P_{n-1}(L)U_n(L)$. Let $\psi_L:L\rightarrow k^\times$ be a non-trivial additive character. Let $N_n(L)$ be the group of all unipotent upper triangular matrices in ${\rm GL}_n(L)$. Let $\Theta_L$ be the character of $N_n(L)$, defined as
\begin{equation}\label{non-deg_char}
\Theta_L((a_{ij})_{i,j=1}^n) : = \psi_L(a_{12}+a_{23}+\cdots a_{n-1,n}),
\end{equation}
for $(a_{ij})_{i,j=1}^n\in N_n(L)$. By abuse of notation, the restriction ${\rm res}_{U_n(L)}\Theta_L$ is also denoted by $\Theta_L$. We have four fundamental functors:
$$
\Psi^-:\mathcal{R}(P_n(L))\rightarrow\mathcal{R}({\rm GL}_{n-1}(L)), \Psi^+:\mathcal{R}({\rm GL}_{n-1}(L))\rightarrow\mathcal{R}(P_n(L))
$$
$$
\Phi^-:\mathcal{R}(P_n(L))\rightarrow \mathcal{R}(P_{n-1}(L)), \Phi^+:\mathcal{R}(P_{n-1}(L))\rightarrow\mathcal{R}(P_n(L)),
$$
given by $\Psi^-=r_{U_n(L),{\rm id}}$, $\Phi^-=r_{U_n(L),\Theta_L}$, $\Psi^+=i_{U_n(L),{\rm id}}$, and $\Phi^+=i_{U_n(L),\Theta_L}$. 
	
\subsubsection{}
Let $\tau$ be a smooth representation of $P_n(L)$. The $m$-th derivative of $\tau$, denoted by $\tau^{(m)}$, is constructed as the representation $\Psi^-(\Phi^-)^{m-1}(\tau)$ of ${\rm GL}_{n-m}(L)$. Moreover, we get a functorial filtration on $\tau$:
$$
0\subseteq\tau_{n}\subseteq\tau_{n-1}\subseteq\dots
\subseteq\tau_2\subseteq\tau_1=\tau,
$$
where $\tau_m=(\Phi^+)^{m-1}(\Phi^-)^{m-1}(\tau)$ and $\tau_m/\tau_{m+1}=(\Phi^+)^{m-1}(\Psi^+)(\tau^{(m)})$. We now deduce:
\begin{lemma}\label{finite_length}
Let $\rho$ be a finite length representation of $G_t(L)$, where $1\leq t < n$. Then, the $P_n(L)$-representation 
$(\Phi^+)^{n-t-1}(\Psi^+)(\rho)$ 
is also of finite length.
\end{lemma}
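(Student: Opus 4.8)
The plan is to argue by induction on $n-t$, using the fundamental functors and the properties of the Bernstein--Zelevinsky derivatives recalled above. The base case is $n-t=1$: here we must show that $\Psi^+(\rho)$ is of finite length as a $P_n(L)$-representation whenever $\rho$ is a finite length representation of ${\rm GL}_{n-1}(L)$. This follows from the exactness of $\Psi^+=i_{U_n(L),{\rm id}}$ together with the fact that $\Psi^+$ sends irreducibles to representations of finite length; more precisely one uses the filtration on $\Psi^+(\rho)$ whose graded pieces are governed by the derivatives of $\rho$, which are finite length since $\rho$ is. For the inductive step, write $(\Phi^+)^{n-t-1}(\Psi^+)(\rho) = \Phi^+\big((\Phi^+)^{n-t-2}(\Psi^+)(\rho)\big)$; by the inductive hypothesis the inner representation $\tau := (\Phi^+)^{n-t-2}(\Psi^+)(\rho)$ is a finite length representation of $P_{n-1}(L)$, so it suffices to prove that $\Phi^+$ preserves finite length.

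The key point is therefore: \emph{if $\tau$ is a finite length representation of $P_{n-1}(L)$, then $\Phi^+(\tau)$ has finite length as a $P_n(L)$-representation.} Since $\Phi^+ = i_{U_n(L),\Theta_L}$ is exact, it suffices to treat the case where $\tau$ is irreducible. For this I would invoke the Bernstein--Zelevinsky theory of derivatives: the functorial filtration $0\subseteq \tau_n\subseteq\cdots\subseteq\tau_1$ on any smooth $P_n$-representation has quotients $\tau_m/\tau_{m+1}\cong (\Phi^+)^{m-1}\Psi^+(\tau^{(m)})$, and the adjunctions between $\Phi^\pm,\Psi^\pm$ show that $\Phi^+$ and $\Psi^+$ carry admissible finite length representations to admissible representations all of whose derivatives are finite length; one then reconstructs finite length from the finiteness of all derivatives together with the filtration. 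Concretely, $\Phi^+(\tau)$ has derivatives $(\Phi^+(\tau))^{(k)} \cong \tau^{(k-1)}$ for $k\geq 1$ and $(\Phi^+(\tau))^{(0)}=0$, so each derivative of $\Phi^+(\tau)$ is of finite length; since a smooth representation of $P_n(L)$ all of whose derivatives have finite length is itself of finite length (the filtration $\{\tau_m\}$ is finite and each graded piece $(\Phi^+)^{m-1}\Psi^+(\tau^{(m)})$ has finite length, the latter because $\Psi^+$ preserves finite length and $\Phi^+$ is exact with the derivative-shifting property just used), we conclude.

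The main obstacle I anticipate is making precise, over a field $k$ of characteristic $l$ rather than over $\mathbb{C}$, that "all derivatives of finite length $\Rightarrow$ finite length" and that $\Psi^+$ of a finite length ${\rm GL}_{n-1}(L)$-representation has finite length as a $P_n(L)$-representation. In the complex case these are standard (Bernstein--Zelevinsky), and in the modular setting they are available through Vign\'eras's book \cite{Vigneras_mod_l}, Chapter III, where the four functors and the derivative filtration are developed with the same formal properties; I would cite those results directly rather than redo the homological bookkeeping. The rest is a clean induction, so modulo citing the modular analogues of the Bernstein--Zelevinsky machinery the argument is routine.
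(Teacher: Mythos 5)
You reduce, via exactness, to showing that $\Phi^+$ carries an irreducible $P_{n-1}(L)$-representation to a finite-length $P_n(L)$-representation, and you try to extract this from the derivative-shifting identity $(\Phi^+\mu)^{(k)}\cong\mu^{(k-1)}$ together with the auxiliary claim that a $P_n(L)$-representation all of whose derivatives are of finite length is itself of finite length. That last step is circular: to justify the auxiliary claim you argue that each graded piece $(\Phi^+)^{m-1}\Psi^+(\pi^{(m)})$ of the BZ filtration has finite length ``because $\Psi^+$ preserves finite length and $\Phi^+$ is exact with the derivative-shifting property,'' but exactness plus the derivative-shifting identity only controls the derivatives of $\Phi^+\mu$, not its length; you are silently assuming that $\Phi^+$ carries finite-length representations to finite-length representations, which is exactly what is under proof.

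The ingredient you are missing, and which makes the whole detour through the filtration unnecessary, is that $\Phi^+$ and $\Psi^+$ send irreducible representations to irreducible representations. This, together with exactness, is precisely what the paper's one-line proof extracts from \cite[Chapter III, Subsection 1.5]{Vigneras_mod_l}, the modular analogue of \cite[Proposition 3.2]{BZ_induced}. Granting it, the lemma is immediate: $\Psi^+$ is inflation along $P_n(L)\twoheadrightarrow{\rm GL}_{n-1}(L)$, hence exact and sends irreducibles to irreducibles; $\Phi^+$ is exact and sends irreducibles to irreducibles; so the composite $(\Phi^+)^{n-t-1}\Psi^+$ is an exact functor taking irreducibles to irreducibles, and applying it to a composition series of $\rho$ yields a finite filtration of $(\Phi^+)^{n-t-1}\Psi^+(\rho)$ with irreducible subquotients. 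You correctly point to Vign\'eras as the source, but the statement you need from there is irreducibility preservation for $\Phi^+$ and $\Psi^+$, not a ``finite-length derivatives imply finite length'' principle, and the latter cannot be established by the route you outline without already having the former.
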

\begin{proof}
This is an immediate consequence of \cite[Chapter III, Subsection 1.5]{Vigneras_mod_l} and the exactness of the functor $(\Phi^+)^{n-t-1}(\Psi^+)$.
\end{proof}
	
\subsubsection{}
Let $F$ be a non-Archimedean local field with residue characteristic $p$. Let $E/F$ be a finite Galois extension of prime degree $l$ with $l\ne p$. Let $\sigma$ be a generator of ${\rm Gal}(E/F)$. Fix a non-trivial additive character $\psi_F:F\rightarrow k^\times$. Let $\psi_E$ be the character $\psi_F\circ{\rm Tr}_{E/F}$, where ${\rm Tr}_{E/F}$ is the trace function. We have the non-degenerate characters $\Theta_F$ and $\Theta_E$ of $N_n(F)$ and $N_n(E)$ respectively, defined by (\ref{non-deg_char}). We now prove the following finiteness result of Tate cohomology:
\begin{proposition}\label{finiteness}
Let $\Xi$ be an irreducible $\sigma$-fixed representation of ${\rm GL}_n(E)$. Let $T_\sigma:\Xi\rightarrow \Xi\circ\sigma$ be an isomorphism with $T_\sigma^l={\rm id}$. Then the Tate cohomology group $\widehat{H}^i(\Xi)$, with respect to the operator $T_\sigma$, has finite length as a representation of ${\rm GL}_n(F)$.
\end{proposition}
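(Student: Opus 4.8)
The plan is to prove, by induction on $n$, the following statement that is slightly stronger than Proposition \ref{finiteness}: for every finite length representation $\rho$ of ${\rm GL}_n(E)$ equipped with a compatible action of $\sigma$ whose generator $T_\sigma$ satisfies $T_\sigma^l = {\rm id}$, both Tate cohomology groups $\widehat{H}^i(\rho)$, $i\in\{0,1\}$, have finite length as representations of ${\rm GL}_n(F)$. For $n=0$ the representation is just a finite dimensional $k$-vector space and there is nothing to prove. For the inductive step I would first pass to the mirabolic subgroup. Since $P_n$ is defined over $\mathbb{Z}$ we have $P_n(E)^\sigma = P_n(F)$, and since Tate cohomology depends only on the underlying space together with the operators $T_\sigma$ and $N = {\rm id} + T_\sigma + \cdots + T_\sigma^{l-1}$, one has $\widehat{H}^i(\rho)|_{P_n(F)} = \widehat{H}^i(\rho|_{P_n(E)})$ as $P_n(F)$-representations. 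A smooth representation of ${\rm GL}_n(F)$ whose restriction to $P_n(F)$ has finite length itself has finite length, because its lattice of ${\rm GL}_n(F)$-subrepresentations sits inside the (finite length) lattice of $P_n(F)$-subrepresentations. Thus it suffices to bound the length of $\widehat{H}^i(\rho|_{P_n(E)})$ over $P_n(F)$.

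Next I would feed in the Bernstein--Zelevinsky filtration $0\subseteq\rho_n\subseteq\cdots\subseteq\rho_1=\rho|_{P_n(E)}$ recalled before Lemma \ref{finite_length}, with graded pieces $\rho_m/\rho_{m+1}\cong(\Phi^+)^{m-1}\Psi^+(\rho^{(m)})$. The functors $\Psi^\pm,\Phi^\pm$ are built only out of $U_n(E)$ and the character $\Theta_E$, and the choice $\psi_E=\psi_F\circ{\rm Tr}_{E/F}$ makes $\Theta_E$ invariant under $\sigma$ because ${\rm Tr}_{E/F}\circ\sigma={\rm Tr}_{E/F}$; hence each of these functors is $\sigma$-equivariant, the filtration is $\sigma$-stable, and the induced $\sigma$-action on each $\rho^{(m)}$ still satisfies $T_\sigma^l={\rm id}$. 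Applying the long exact sequence of Tate cohomology attached to the $\sigma$-equivariant short exact sequences $0\to\rho_{m+1}\to\rho_m\to\rho_m/\rho_{m+1}\to 0$ and descending on $m$, the finiteness of $\widehat{H}^i(\rho|_{P_n(E)})$ over $P_n(F)$ is reduced to the finiteness of $\widehat{H}^i\big((\Phi^+)^{m-1}\Psi^+(\rho^{(m)})\big)$ over $P_n(F)$, for each $1\le m\le n$.

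The technical heart — and the step I expect to be the main obstacle — is to show that the functors $\Psi^+$ and $\Phi^+$ commute with Tate cohomology and, in doing so, become the analogous functors defined over $F$:
$$\widehat{H}^i\big((\Phi^+_E)^{m-1}\Psi^+_E(\rho^{(m)})\big)\;\cong\;(\Phi^+_F)^{m-1}\Psi^+_F\big(\widehat{H}^i(\rho^{(m)})\big)$$
as $P_n(F)$-representations, at least up to normalizing characters which do not affect the length. For $\Psi^+$, which is exact and essentially an inflation along a homomorphism defined over $F$, this is formal: an exact $\sigma$-equivariant functor commutes with the kernels, images and subquotients out of which $\widehat{H}^i$ is built. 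For $\Phi^+=i_{U_n,\Theta}$, which is realized as compactly supported sections of a $\sigma$-equivariant sheaf on the $l$-space $X=P_n(E)/P_{n-1}(E)U_n(E)$, I would invoke Proposition \ref{Tate_isom}, which identifies $\widehat{H}^i$ of those sections with the sections over $X^\sigma$ of the Tate cohomology sheaf; it then remains to identify $X^\sigma$ with $P_n(F)/P_{n-1}(F)U_n(F)$, which follows by Galois descent for the quasi-affine homogeneous space $X$ (the relevant stabilizers having vanishing Galois cohomology over $E/F$), together with the compatibility of $\Theta_E|_{U_n(F)}$ with $\Theta_F$ up to a harmless normalization.

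Granting this, the induction closes. For $m\ge 1$ the derivative $\rho^{(m)}$ is a finite length representation of ${\rm GL}_{n-m}(E)$ — the derivative functor $\Psi^-(\Phi^-)^{m-1}$ is exact and the relevant derivatives of irreducibles have finite length by the mod-$l$ Bernstein--Zelevinsky theory, see \cite{Vigneras_mod_l} — carrying the induced $\sigma$-action with $T_\sigma^l={\rm id}$. Since $n-m<n$, the inductive hypothesis gives that $\widehat{H}^i(\rho^{(m)})$ has finite length over ${\rm GL}_{n-m}(F)$, and then Lemma \ref{finite_length} — applied with $t=n-m$ when $m<n$, and, for $m=n$, the fact that $\Phi^+$ preserves finite length — shows that $(\Phi^+_F)^{m-1}\Psi^+_F\big(\widehat{H}^i(\rho^{(m)})\big)$ has finite length over $P_n(F)$. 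This is exactly the input the reduction above requires, so $\widehat{H}^i(\rho|_{P_n(E)})$, and hence $\widehat{H}^i(\rho)$, has finite length as needed. Specializing the stronger statement to an irreducible $\sigma$-fixed $\Xi$ yields Proposition \ref{finiteness}.
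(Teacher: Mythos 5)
Your proposal follows essentially the same route as the paper's proof: induction on $n$ via the Bernstein--Zelevinsky filtration of $\Xi|_{P_n(E)}$, $\sigma$-equivariance of $\Phi^\pm,\Psi^\pm$ coming from $\psi_E\circ\sigma=\psi_E$, Proposition~\ref{Tate_isom} to pass Tate cohomology through $\Phi^+$, the computation $X_{E,m}^\sigma = X_{F,m}$ (which the paper does explicitly via the non-abelian cohomology exact sequence and Hilbert~90, and which you phrase as Galois descent for a quasi-affine homogeneous space with cohomologically trivial stabilizers), the six-term exact sequences $S(m)$ to glue graded pieces, and the final step that $P_n(F)$-finite length implies ${\rm GL}_n(F)$-finite length. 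The one genuine refinement in your version is that you run the induction over \emph{finite-length} $\sigma$-equivariant representations rather than over irreducible $\sigma$-fixed ones: since the derivatives $\Xi^{(m)}$ of an irreducible $\Xi$ are finite length but in general not irreducible, your formulation is the one that actually closes the induction, whereas the paper's stated inductive hypothesis is slightly too narrow and requires either your strengthening or a supplementary argument reducing the $\sigma$-equivariant finite-length case back to the irreducible $\sigma$-fixed case.
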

\begin{proof}
We prove the proposition using induction on $n$. The case $n=1$ is clear. So, we assume that the proposition is true for all irreducible $\sigma$-fixed representations of ${\rm GL}_t(E)$ and for all $t<n$. Now, we consider $\Xi$ as a representation of $P_n(E)$. Since $\psi_E(\sigma(x))=\psi_E(x)$, for $x\in E$, we get the isomorphism
\begin{equation}\label{isom_1}
\Phi^-(\Xi\circ\sigma)\simeq\Phi^-(\Xi)\circ\sigma,
\end{equation}
as representation of $P_{n-1}(E)$. Similarly, for any representation $\tau\in\mathcal{R}(P_{n-1}(E))$, we have the $P_n(E)$-equivariant isomorphism 
\begin{equation}\label{isom_2}
\Phi^+(\tau\circ\sigma) \simeq \Phi^+(\tau)\circ\sigma.
\end{equation}
Using (\ref{isom_1}) and (\ref{isom_2}), and the isomorphism $T_\sigma$, we get an isomorphism between the representations $\Xi_m$ and $\Xi_m\circ\sigma$, and also between the representations $\Xi^{(m)}$ and $\Xi^{(m)}\circ\sigma$, for all $m\leq n$. 
		
Recall that $X_{E,m}$ denotes the coset space $P_m(E)/P_{m-1}(E)U_m(E)$. Since $P_{m-1}(E)U_m(E)$ is a $\sigma$-stable subgroup of $P_m(E)$, we have the following long exact sequence of non-abelian cohomology (\cite[Appendix, Proposition 1]{Serre-local-field}):
\begin{equation}\label{ses_1}
0\longrightarrow P_{m-1}(F)U_m(F)\longrightarrow P_m(F)\longrightarrow X_{E,m}^\sigma\longrightarrow H^1(\sigma, P_{m-1}(E)U_m(E))\longrightarrow H^1(\sigma, P_m(E))
\end{equation}
Consider the short exact sequence of non-abelian ${\rm Gal}(E/F)$-modules
\begin{equation}\label{ses_2}
0\longrightarrow U_m(E)\longrightarrow P_{m-1}(E)U_m(E)\longrightarrow P_{m-1}(E)\longrightarrow 0
\end{equation} 
Using Hilbert's theorem 90, we get that the pointed sets $H^1(\sigma, U_m(E))$ and $H^1(\sigma, P_{m-1}(E))$ are trivial. Then, it follows from the long exact sequence of non-abelian cohomology corresponding to (\ref{ses_2}) that $H^1(\sigma, P_{m-1}(E)U_m(E)) = 0$. Hence, the long exact sequence (\ref{ses_1}) gives the equality $X_{E,m}^\sigma = X_{F,m}$. Now, using Proposition \ref{Tate_isom} repeatedly $(m-1)$-times, we get the $P_n(F)$-equivariant isomorphism
$$
\widehat{H}^i(\Xi_m/\Xi_{m+1}) \simeq
(\Phi^+)^{m-1}(\Psi^+)\big(\widehat{H}^i(\Xi^{(m)})\big).
$$
By induction hypothesis, the ${\rm GL}_{n-m}(F)$-representation $\widehat{H}^i(\Xi^{(m)})$ is of finite length, for all $m<n$. In view of Lemma \ref{finite_length}, it then follows from the above isomorphism that the $P_n(F)$-representation $\widehat{H}^i(\Xi_m/\Xi_{m+1})$ is of finite length, for all $m<n$. 
		
Now, for each $m\in\{1,2,\dots,n-1\}$, consider the short exact sequence of $P_n(E)$-representations
$$
0\longrightarrow \Xi_{m+1}\longrightarrow \Xi_m \longrightarrow \Xi_m/\Xi_{m+1} \longrightarrow 0.
$$ 
Since ${\rm Gal}(E/F)$ is cyclic, the corresponding long exact sequence of Tate cohomology gives the following diagram:
$$
\begin{tikzcd}[column sep={1.5cm,between origins}, row sep={1.932050908cm,between origins}]
& \widehat{H}^0(\Xi_{m+1}) \arrow[rr,]  && \widehat{H}^0(\Xi_m) \arrow[rd,] &  \\
\widehat{H}^1(\Xi_m/\Xi_{m+1}) \arrow[ru, ]&  &&  & \widehat{H}^0(\Xi_m/\Xi_{m+1}) \arrow[ld,] \\
& \widehat{H}^1(\Xi_m) \arrow[lu, ] && \widehat{H}^1(\Xi_{m+1}) \arrow[ll, ] & 
\end{tikzcd}
$$
We denote the above exact sequence by $S(m)$. Recall that $\Xi_n$ is the compactly induced representation ${\rm ind}_{N_n(E)}^{P_n(E)}(\Theta_E)$, and using \cite[Proposition 3.3.1]{MR3432583}, we get that $\widehat{H}^0(\Xi_n)$ is isomorphic to the irreducible representation ${\rm ind}_{N_n(F)}^{P_n(F)}(\Theta_F^l)$ and $\widehat{H}^1(\Xi_n) = 0$. In particular, the representation $\widehat{H}^i(\Xi_n)$, for $i\in\{0,1\}$, is of finite length. Now, using the exact sequence $S(n-1)$ and the finiteness of $\widehat{H}^i(\Xi_{n-1}/\Xi_n)$, we get that $\widehat{H}^i(\Xi_{n-1})$ is a finite length representation of $P_n(F)$. Again, using the finiteness of both the representations $\widehat{H}^i(\Xi_{n-1})$ and $\widehat{H}^i(\Xi_{n-2}/\Xi_{n-1})$, it follows from the exact sequence $S(n-2)$ that the representation $\widehat{H}^i(\Xi_{n-2})$ is of finite length. Thus, after a finite number of similar inductive steps, we finally get that $\widehat{H}^i(\Xi)$ is of finite length as a representation of $P_n(F)$, and hence of ${\rm GL}_n(F)$. This completes the proof.
\end{proof}
It is reasonable to expect that Proposition \ref{finiteness} holds for arbitrary connected reductive algebraic groups $\textbf{G}$ over $F$. Next, we give a formulation of the compatibility of Brauer homomorphism with linkage under the finite length assumption of the Tate cohomology $\widehat{H}^i(\Xi)$ for any irreducible $\sigma$-fixed representation $\Xi$ of $G=\textbf{G}(F)$. It enables us to study linkage from the module theoretic point of view via Brauer homomorphism. Then, using Theorem \ref{main_thm}, we show that linkage is compatible with the Kazhdan isomorphism.
	
\subsection{Compatibility of Brauer homomorphism with linkage}\label{com_linkage}
Let $\Xi$ be an irreducible $\sigma$-fixed representation of $G$, and
let $T:\Xi\rightarrow \Xi\circ\sigma$ be an isomorphism such that
$T^l={\rm id}$. For any $\sigma$-plain compact open subgroup $K$ of
$G$, the inclusion map $\Xi^K\hookrightarrow \Xi$ is
$\sigma$-invariant and it induces a map
$\widehat{H}^i(\Xi^K)\rightarrow \widehat{H}^i(\Xi)$, taking values in
the $\mathcal{H}(G^\sigma,K^\sigma)$-module
$\widehat{H}^i(\Xi)^{K^\sigma}$. It is shown in \cite[Section
6.2]{MR3432583} that, for all $h\in\mathcal{H}(G,K)^\sigma$, the Brauer homomorphism $\overline{\rm Br}$ induces the following commutative diagram
$$
\xymatrix{
	\widehat{H}^i(\Xi^K)\ar[dd]_{\widehat{H}^i(h)}  \ar[rr]^{}  &&
	\widehat{H}^i(\Xi)^{K^\sigma} \ar[dd]^{{\rm \overline{Br}}(h)} \\\\
	\widehat{H}^i(\Xi^K) \ar[rr]_{} &&  \widehat{H}^i(\Xi)^{K^\sigma}
}
$$
and it induces the compatibility of Brauer homomorphism with linkage in the sense that, for a representation $\rho$ of $G^\sigma$ to be linked with $\Xi$, it is equivalent to say that there exists a composition factor $\mathcal{N}$ of the $\mathcal{H}(G,K)^\sigma$-module $\widehat{H}^i(\Xi^K)$ such that
\begin{equation}\label{formulation_Br_linkage}
\mathcal{H}(G^\sigma,K^\sigma)\otimes_{\mathcal{H}(G,K)^\sigma} \mathcal{N} 
\simeq (\rho^{(l)})^{K^\sigma}.
\end{equation}
	
\subsubsection{Compatibility of Kazhdan isomorphism with linkage}
Let $F$ and $F'$ be two non-Archimedean $m$-close local fields with the same residue characteristic $p$. Let $E$ be a finite Galois extension of $F$ of prime degree $l$ with $l\ne p$. By Lemma \ref{lemma_close}, we have a finite Galois extension $E'/F'$ of degree $l$ such that $E'$ is $em$-close to $E$, where $e=1$ if $E/F$ is unramified and $e=l$ if $E/F$ is totally ramified. Fix two generators $\sigma$ and $\sigma'$ of the groups ${\rm Gal}(E/F)$ and ${\rm Gal}(E'/F')$ respectively.  
	
Let $\textbf{G}$ be a split connected reductive group defined over $\mathbb{Z}$. Let $K_E$ (resp. $K_F$) be the congruence subgroup of $G_E$ (resp. $G_F$) of level $m$ (resp. $em$) (see Section \ref{K} for the definition). Then, from Theorem \ref{main_thm}, we get the following relation:
\begin{equation}\label{BK}
{\rm Kaz}_m^F\circ {\rm \overline{Br}}={\rm \overline{Br}'}\circ 
{\rm Kaz}_{em}^E.
\end{equation}
Moreover, we have the following bijections (see \cite[Section 2.3]{ganapathy2015local})
\begin{equation}\label{ms_1}
\big\{(\rho_E,V_E)\in {\rm Irr}(G_E):V_E^{K_E}\ne 0\big\}\longrightarrow\big\{(\rho_{E'}, V_{E'})\in {\rm
Irr}(G_{E'}):V_{E'}^{K_{E'}}\ne 0\big\}
\end{equation} and
\begin{equation}\label{ms_2}
\big\{(\rho_F,V_F)\in {\rm Irr}(G_F):V_F^{K_F}\ne 0\big\}
\longrightarrow\big\{(\rho_{F'},V_{F'})\in {\rm Irr}(G_{F'}):
V_{F'}^{K_{F'}}\ne 0\big\},
\end{equation}
induced by the isomorphisms ${\rm Kaz}_{em}^E$ and ${\rm Kaz}_m^F$, respectively. With these notations, we now prove
\begin{theorem}\label{linkage_kaz}
Let $(\rho_E,V_E)$ be an irreducible $\sigma$-fixed representation of $G_E$ with $V_E^{K_E}\ne 0$, and let $(\rho_F,V_F)$ be an irreducible
representation of $G_F$ with $V_F^{K_F}\ne 0$. Let $(\rho_{E'},V_{E'})$ and $(\rho_{F'},V_{F'})$ be the corresponding objects under the bijections (\ref{ms_1}) and (\ref{ms_2}). Then, the irreducible representation $\rho_{E'}$ of $G_{E'}$ is $\sigma'$-fixed, and $\rho_F'$ is linked with $\rho_E'$ if and only if $\rho_F$ is linked with $\rho_E$.
\end{theorem}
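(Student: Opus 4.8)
The plan is to deduce Theorem \ref{linkage_kaz} from Theorem \ref{main_thm} together with the module-theoretic reformulation of linkage recorded in \eqref{formulation_Br_linkage}. The key point is that everything in sight --- the Brauer homomorphisms $\overline{\rm Br}$, $\overline{\rm Br}'$, the Kazhdan isomorphisms ${\rm Kaz}_m^F$, ${\rm Kaz}_{em}^E$, and the dictionary between irreducible representations with nonzero $K$-fixed vectors and simple modules over the corresponding Hecke algebras --- fits into one compatible package once the square \eqref{BK} is known to commute.

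\textbf{Step 1: $\sigma'$-fixedness of $\rho_{E'}$.} Under the bijection \eqref{ms_1}, the representation $\rho_E$ corresponds to the simple $\mathcal{H}(G_E,K_E)$-module $V_E^{K_E}$, and $\rho_{E'}$ corresponds to its transport ${\rm Kaz}_{em}^E{}_*(V_E^{K_E})$ along the algebra isomorphism. The condition that $\rho_E$ is $\sigma$-fixed translates, on the level of Hecke modules, into the statement that $V_E^{K_E}$ is isomorphic to its $\sigma$-twist $(V_E^{K_E})^\sigma$ (the twist being by the automorphism $\sigma$ of $\mathcal{H}(G_E,K_E)$). By Proposition \ref{comp_galois_action}, ${\rm Kaz}_{em}^E$ intertwines the $\sigma$-action with the $\sigma'$-action; hence the transported module is isomorphic to its own $\sigma'$-twist, which means precisely that $\rho_{E'}$ is $\sigma'$-fixed. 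This is the step where Proposition \ref{comp_galois_action} is essential; the rest of the argument only uses the square \eqref{BK}.

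\textbf{Step 2: translating linkage into Hecke-module language on both sides.} Choose $K_E$ (level $em$) as the $\sigma$-plain compact open subgroup on the $E$-side and $K_F=K_E^\sigma$ (level $m$, by \cite[Proposition 12.9.4]{MR4520154}) on the $F$-side; similarly $K_{E'}$, $K_{F'}$ on the primed side. By \eqref{formulation_Br_linkage}, $\rho_F$ is linked with $\rho_E$ if and only if some composition factor $\mathcal{N}$ of the $\mathcal{H}(G_E,K_E)^\sigma$-module $\widehat{H}^i(V_E^{K_E})$ (for some $i\in\{0,1\}$) satisfies $\mathcal{H}(G_F,K_F)\otimes_{\mathcal{H}(G_E,K_E)^\sigma}\mathcal{N}\simeq (\rho_F^{(l)})^{K_F}$, where $\mathcal{H}(G_F,K_F)$ is viewed as an $\mathcal{H}(G_E,K_E)^\sigma$-algebra via $\overline{\rm Br}$. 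The same criterion holds on the primed side with $\overline{\rm Br}'$.

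\textbf{Step 3: transport along Kazhdan.} Now apply ${\rm Kaz}_{em}^E$ and ${\rm Kaz}_m^F$. Because ${\rm Kaz}_{em}^E$ is a $\langle\sigma\rangle$-equivariant algebra isomorphism (Proposition \ref{comp_galois_action}), it restricts to an isomorphism $\mathcal{H}(G_E,K_E)^\sigma\xrightarrow{\sim}\mathcal{H}(G_{E'},K_{E'})^{\sigma'}$ and carries the module $V_E^{K_E}$ to $V_{E'}^{K_{E'}}$; since Tate cohomology $\widehat{H}^i$ is functorial in the underlying $\langle\sigma\rangle$-module, it carries $\widehat{H}^i(V_E^{K_E})$ to $\widehat{H}^i(V_{E'}^{K_{E'}})$ compatibly, and hence carries composition factors to composition factors. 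The commutativity \eqref{BK}, ${\rm Kaz}_m^F\circ\overline{\rm Br}=\overline{\rm Br}'\circ{\rm Kaz}_{em}^E$, is exactly what guarantees that the base-change functor $\mathcal{H}(G_F,K_F)\otimes_{\mathcal{H}(G_E,K_E)^\sigma}(-)$ on the left matches, under transport, the base-change functor $\mathcal{H}(G_{F'},K_{F'})\otimes_{\mathcal{H}(G_{E'},K_{E'})^{\sigma'}}(-)$ on the right. Finally, ${\rm Kaz}_m^F$ sends $(\rho_F^{(l)})^{K_F}$ to $(\rho_{F'}^{(l)})^{K_{F'}}$; here one uses that ${\rm Kaz}_m^F$ commutes with the Frobenius twist, which is clear because Frobenius twist only changes the $k$-linear structure through the fixed field automorphism $c\mapsto c^{1/l}$ and does not touch the combinatorics of double cosets defining ${\rm Kaz}_m^F$. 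Chaining these transports, the existence of a composition factor $\mathcal{N}$ on the $E$-side satisfying the linkage criterion is equivalent to the existence of the corresponding composition factor $\mathcal{N}'={\rm Kaz}_{em}^E{}_*\mathcal{N}$ on the $E'$-side satisfying the primed criterion, which is the desired equivalence.

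\textbf{Main obstacle.} The genuinely delicate point is Step 3's claim that Kazhdan transport is compatible with the \emph{formation of Tate cohomology of Hecke modules} and with the \emph{extension-of-scalars along Brauer}. The first requires knowing that the isomorphism $V_E^{K_E}\xrightarrow{\sim}V_{E'}^{K_{E'}}$ of Hecke modules can be chosen to intertwine the chosen generators $T_\sigma$ and $T_{\sigma'}$ of the $\sigma$- and $\sigma'$-actions with $l$-th power identity --- i.e., that the cohomology operators $N$ and $1-\sigma$ are transported correctly; this follows from Proposition \ref{comp_galois_action} but must be spelled out carefully, since the choice of $T_\sigma$ is unique only up to an $l$-th root of unity in $k^\times$, and one must check that this ambiguity does not affect the Tate cohomology (it does not, because scaling $T_\sigma$ by $\zeta$ scales $N$ by $\sum\zeta^j$, which is $0$ unless $\zeta=1$, in which case nothing changes; and on $\widehat H^1$ the twist acts by an automorphism so composition factors are unaffected). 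The second --- commutation of base change with $\overline{\rm Br}$ versus $\overline{\rm Br}'$ --- is precisely the content of \eqref{BK}, so once Theorem \ref{main_thm} is invoked the argument is formal; the bookkeeping of which level ($m$ versus $em$) attaches to which field is the only thing that needs care.
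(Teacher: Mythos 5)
Your proof is correct and follows essentially the same route as the paper: both translate linkage into the module-theoretic criterion \eqref{formulation_Br_linkage}, transport composition factors of $\widehat{H}^i(V_E^{K_E})$ along the $\langle\sigma\rangle$-equivariant isomorphism ${\rm Kaz}_{em}^E$, and use the commuting square \eqref{BK} to match the two base-change functors. The only cosmetic difference is in the $\sigma'$-fixedness step: you argue abstractly that equivariance of ${\rm Kaz}_{em}^E$ forces the transported simple Hecke module to be isomorphic to its $\sigma'$-twist, while the paper instead extends $V_E^{K_E}$ to the full module $\mathcal{N}'=\mathcal{H}(G_{E'})\otimes_{\mathcal{H}(G_{E'},K_{E'})}V_E^{K_E}$ and explicitly writes down an intertwiner $T_{\sigma'}(f'\otimes v)=(\sigma'.f')\otimes T_\sigma(v)$ --- the latter has the advantage of producing, by restriction, the specific operator on $V_{E'}^{K_{E'}}$ needed for the Tate-cohomology comparison in part two, making the intertwining of $T_\sigma$ and $T_{\sigma'}$ (which you flag as the delicate point) immediate by construction.
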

\begin{proof}
We divide the proof into two parts. The first part proves that the irreducible representation $\rho_{E'}$ is $\sigma'$-fixed. In the second part, we show that the linkage between $\rho_F$ and $\rho_E$ implies the linkage between $\rho_{F'}$ and $\rho_{E'}$, and vice versa.
\subsubsection{}
Recall that the space of $K_E$-fixed vectors $V_E^{K_E}$ is a simple $\mathcal{H}(G_E,K_E)$-module, and it can be regarded as a $\mathcal{H}(G_{E'},K_{E'})$-module via the Kazhdan isomorphism ${\rm Kaz}_{em}^E$. Consider the $\mathcal{H}(G_{E'})$-module
$$
\mathcal{N}'= \mathcal{H}(G_{E'})\otimes_{\mathcal{H}(G_{E'},K_{E'})}V_E^{K_E}.
$$ 
Note that the action of $\langle\sigma'\rangle$ on $\mathcal{H}(G_{E'})$ induces an action of $\langle\sigma'\rangle$ on $\mathcal{N}'$, by setting
$$
\sigma'.(f'\otimes v)=(\sigma'.f')\otimes v,
$$ 
for all $f'\in\mathcal{H}(G_{E'})$ and $v\in V_E^{K_E}$. Following the construction of \cite[Chapter I, Subsection 4.4]{Vigneras_mod_l}, the $\mathcal{H}(G_{E'})$-module $\mathcal{N}'$ gives the irreducible representation $(\rho_{E'},V_{E'})$, defined as
$$
\rho_{E'}(g')(x')=1_{g'K_{E'}}*x',
$$ 
for $g'\in G_{E'}, x'\in V_{E'}$. Here $*$ denotes the action of the Hecke algebra $\mathcal{H}(G_{E'})$ on the space $V_{E'}$ and $1_{g'K_{E'}}$ is the characteristic function of the left coset $g'K_{E'}$. Since $\rho_E$ is $\sigma$-fixed, there exists an isomorphism $T_\sigma:\rho_E \rightarrow \rho_E\circ\sigma$. Then the map $T_{\sigma'}:\mathcal{N}'\rightarrow \mathcal{N}'$, defined by 
$$
T_{\sigma'}(f'\otimes v) := (\sigma'.f')\otimes T_\sigma(v), 
$$
induces an isomorphism between $\rho_{E'}$ and $\rho_{E'}\circ\sigma'$. Hence, $\rho_{E'}$ is $\sigma'$-fixed.
		
\subsubsection{}
Suppose the representation $\rho_F$ is linked with $\rho_E$. Then, the space of $K_F$-fixed vectors $(V_F^{(l)})^{K_F}$, considered as $\mathcal{H}(G_F,K_F)$-module, is a composition factor of $\widehat{H}^i(V_E)^{K_F}$, for some $i$. If we regard these as $\mathcal{H}(G_E,K_E)^\sigma$-modules via the Brauer homomorphism $\overline{\rm Br}$, then the formulation (\ref{formulation_Br_linkage}) gives a composition factor $\mathcal{M}$ of $\widehat{H}^i(V_E^{K_E})$ such that 
\begin{equation}\label{iso_1}
\mathcal{H}(G_F,K_F)\otimes_{\mathcal{H}(G_E,K_E)^\sigma} \mathcal{M}\simeq (V_{F}^{(l)})^{K_F}.
\end{equation} 
Note that the isomorphism $V_E^{K_E} \rightarrow V_{E'}^{K_{E'}}$ of $\mathcal{H}(G_E,K_E)$-modules induces a $\mathcal{H}(G_E,K_E)^\sigma$-module isomorphism $\Phi:\widehat{H}^i(V_E^{K_E}) \rightarrow \widehat
{H}^i(V_{E'}^{K_{E'}})$, which is also a $\mathcal{H}(G_{E'},K_{E'})^{\sigma'}$-module isomorphism via the Kazhdan map ${\rm Kaz}^E_{em}$. 
		
Let $\mathcal{M}'$ be the image of $\mathcal{M}$ under $\Phi$. Then $\mathcal{M}'$ is a composition factor of $\widehat{H}^i(V_{E'}^{K_{E'}})$, and the $\mathcal{H}(G_{F'},K_{F'})$-module 
$$
\mathcal{H}(G_{F'},K_{F'})\otimes_
{\mathcal{H}(G_{E'},K_{E'})^{\sigma'}} \mathcal{M}'
$$ 
is a composition factor of $\widehat{H}^i(V_{E'})^{K_{F'}}$. We also have an isomorphism $\varphi:V_F^{K_F} \rightarrow V_{F'}^{K_{F'}}$, which is equivariant under the action of $\mathcal{H}(G_F,K_F)$ and hence of $\mathcal{H}(G_{F'},K_{F'})$ via Kazhdan isomorphism ${\rm Kaz}_m^F$. Using the isomorphisms $\Phi$, $\varphi$ and the relation (\ref{BK}), it follows from (\ref{iso_1}) that
$$
\mathcal{H}(G_{F,},K_{F'})\otimes_{\mathcal{H}(G_{E'},K_{E'})^{\sigma'}} \mathcal{M}'\simeq (V_{F'}^{(l)})^{K_{F'}}.
$$
This implies that the representation $\rho_{F'}$ is linked with $\rho_{E'}$. A similar argument also shows that the representation $\rho_F$ is linked with $\rho_E$ if $\rho_{F'}$ is linked with $\rho_{E'}$. Hence, the theorem.
\end{proof}
	
\textbf{Acknowledgements.} The author expresses his deep gratitude to the anonymous referee for careful reading of the manuscript and for various suggestions. The author is grateful to Santosh Nadimpalli for communicating Proposition \ref{finiteness}, and for his helpful comments during the work. The author would also like to thank Radhika Ganapathy for her valuable comments and suggestions.

\bibliographystyle{amsalpha}
\bibliography{BK23.bib}	
	
\noindent\\
Department of Mathematics and Statistics,\\
Indian Institute of Technology Kanpur,
U.P. 208016, India.\\
Email: \texttt{mathsabya93@gmail.com}, \texttt{sabya@iitk.ac.in}

\end{document}